\numberwithin{equation}{section}
\newtheorem{Theorem}{Theorem}[section]
 \newtheorem{Lemma}[Theorem]{Lemma}
 \newtheorem*{TheoremMain}{Theorem}
 \newtheorem{Corollary}[Theorem]{Corollary}
 \newtheorem{Proposition}[Theorem]{Proposition}
 \newtheorem{Remark}[Theorem]{Remark}
 \newtheorem{Definition}[Theorem]{Definition}
 \newtheorem{Conjecture}[Theorem]{Conjecture}
\DeclareMathOperator\MLdeg{ML-degree}
\subjclass[2020]{primary: 15A69, 14Q20, 14Q65 
}
\newcommand\ignore[1]{}
\newcommand\CC{{\mathbb{C}}}
\newcommand\PP{{\mathbb{P}}}
\def\operatorname#1{\mathop{\rm #1}\nolimits}
\def\rank{\operatorname{rank}}
\def\det{\operatorname{det}}
\newcommand{\pb}{\ar@{}[dr]|{\text{\pigpenfont J}}}
\newcommand{\xdasharrow}[2][->]{
\tikz[baseline=-\the\dimexpr\fontdimen22\textfont2\relax]{
\node[anchor=south,font=\scriptsize, inner ysep=1.5pt,outer xsep=2.2pt](x){#2};
\draw[shorten <=3.4pt,shorten >=3.4pt,dashed,#1](x.south west)--(x.south east);
}}
\def\opn#1#2{\def#1{\operatorname{#2}}} 
\opn\Id{Id}
\begin{document}
\title{On the Maximum Likelihood Degree for Gaussian graphical models}


\author[Am\'endola]{Carlos Am\'endola}
\address{
	Technische Universit\"at Berlin,  Institut f\"ur Mathematik,
	D-10623 Berlin, Germany
}
\email{amendola@math.tu-berlin.de}

\author[Dinu]{Rodica Andreea Dinu}
\address{%
	University of Konstanz, Fachbereich Mathematik und Statistik, Fach D 197 D-78457 Konstanz, Germany, and Simion Stoilow Institute of Mathematics of the Romanian Academy, Calea Grivitei 21, 010702, Bucharest, Romania}
\email{rodica.dinu@uni-konstanz.de}

\author[Micha{\l}ek]{Mateusz Micha{\l}ek}
\address{
	University of Konstanz, Germany, Fachbereich Mathematik und Statistik, Fach D 197
	D-78457 Konstanz, Germany
}
\email{mateusz.michalek@uni-konstanz.de}

\author[Vodi\v{c}ka]{Martin Vodi\v{c}ka}
\address{Šafárik University, Faculty of Science, Jesenná 5, 04154 Košice, Slovakia}
	\email{martin.vodicka@upjs.sk}

\begin{abstract}
In this paper we revisit the likelihood geometry of Gaussian graphical models. We give a detailed proof that the ML-degree behaves monotonically on induced subgraphs. Furthermore, we complete a missing argument that the ML-degree of the $n$-th cycle is larger than one for any $n\geq 4$, therefore completing the characterization that the only Gaussian graphical models with rational maximum likelihood estimator are the ones corresponding to chordal (decomposable) graphs. Finally, we prove that the formula for the ML-degree of a cycle conjectured by Drton, Sturmfels and Sullivant provides a correct lower bound.
\end{abstract}

\thanks{RD was supported by the Alexander von Humboldt Foundation, MM was supported by the DFG grant 467575307, MV was supported by Slovak VEGA grant 1/0152/22.}
\maketitle

\section{Introduction}


Gaussian graphical models have been used extensively in statistics to express dependence relationships between random variables \cite{lauritzen1996graphical,SethBook,StUh}. The key connection between algebra and statistics stems from the fact that for an $n$-dimensional Gaussian random vector $X \sim N(0,\Sigma)$,
\begin{equation}\label{eq:ci}
     X_i \perp\!\!\!\perp X_j | X_{[n]\setminus \{ i , j \}} \iff (\Sigma^{-1})_{ij}=0.
\end{equation}
In other words, the random variables corresponding to the $i$-th and $j$-th coordinates of $X$ ($i,j \in [n]=\{1,\dots,n\}$) are conditionally independent given the rest of the variables, if and only if the corresponding entry in the inverse of the covariance matrix is 0.

This motivates the following definition, where a graph $G=(V,E)$ is used to encode the sparsity in \eqref{eq:ci}.

\begin{Definition}
    Let $G$ be an undirected graph with $V=[n]$. The \emph{Gaussian graphical model} given by $G$ consists of all $n$-dimensional Gaussian distributions with covariance matrix $\Sigma$ such that $(\Sigma^{-1})_{ij}=0$ for every missing edge $i-j$.
\end{Definition}

In this way, the Gaussian graphical model associated to a graph $G$ is a \emph{linear concentration model}, as it imposes linear restrictions on the inverse covariance matrices. We denote this linear space by $L_G$, and denote the closure of the set of matrices obtained by inverting the
regular matrices in $L_G$ by $L_G^{-1}$.

Set $S^2(k^n)$ to be the space of $n \times n$ symmetric matrices over the
field $k$. Given a graph $G$ and sample data from $X$ summarized in a sample covariance matrix $S \in S^2(\mathbb{R}^n)$, one wishes to estimate the covariance matrix $\Sigma$. A natural estimator is the \emph{maximum likelihood estimator (MLE)}, which maximizes the \emph{log-likelihood function}
\begin{equation}\label{eq:lik}
    \ell(\Sigma) = \log \det (\Sigma^{-1}) - \mathrm{tr}(S\Sigma^{-1}).
\end{equation}
The number of critical points of $\ell$ over $S^2(\mathbb{C}^n)$ is known as the \emph{ML-degree} of $G$ and we denote it by $\MLdeg(G)$. This is well-defined as long as the matrix $S$ is generic, see \cite{AmendolaEtAl, SethBook, StUh, dms2021}. Geometrically, the ML-degree is the number of intersection points of $(S+L_G^{\perp})\cap L_{G}^{-1}$ with $S$ a generic symmetric matrix in $S^2(\CC^n)$. Sometimes it is more convenient to work projectively, as the number of intersection points remains the same. When the graph is the $n$-th cycle $C_n$, one can compute that
$$\MLdeg(C_n)=1,5,17,49,129,321 \, \text{ for } n=3,4,5,6,7,8.$$
Based on this, Drton, Sturmfels and Sullivant posed the following:
\begin{Conjecture}(\cite[Section 7.4]{drtonSS})\label{conj}
$$\MLdeg(C_n)=(n-3)\cdot 2^{n-2}+1.$$
\end{Conjecture}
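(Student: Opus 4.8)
The plan is to prove the equality by squeezing the number of complex critical points between matching lower and upper bounds. First I would make the score equations explicit. Write $K\in L_{C_n}$ as the symmetric cyclic matrix with diagonal entries $a_1,\dots,a_n$, adjacent off-diagonal entries $b_1,\dots,b_{n-1}$ in positions $(i,i+1)$, and the corner entry $b_n$ in positions $(1,n)$ and $(n,1)$. The critical points are the $K$ with $(K^{-1})_{ij}=S_{ij}$ on the support of $C_n$; clearing denominators gives the polynomial system
\[
\Delta_{ij}(K)=S_{ij}\,\det(K),\qquad (i,j)\in E(C_n)\ \text{or}\ i=j,
\]
where $\Delta_{ij}$ is the $(i,j)$ cofactor. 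This is a square system of $2n$ equations in the $2n$ unknowns $(a,b)$, and $\MLdeg(C_n)$ is its number of solutions for generic $S$.

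For the lower bound I would degenerate the sample matrix along a one-parameter family $S(t)$ chosen so that, in the limit, the cyclic coupling governed by $b_n$ switches off and the $n$-cycle breaks into chordal pieces. Since chordal graphs have $\MLdeg=1$ (equivalently, rational MLE), the limiting system has few, explicitly understood solutions; reattaching the corner edge reintroduces solutions in a controlled, combinatorially enumerable way. Carrying out this polyhedral count of the solution branches that stay bounded and pairwise distinct along $S(t)$ should produce at least $(n-3)2^{n-2}+1$ critical points, hence $\MLdeg(C_n)\ge (n-3)2^{n-2}+1$.

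The matching inequality is where the genuine difficulty lies, and it is the step I expect to be the main obstacle. My preferred route is a Bernstein--Kushnirenko (BKK) count: I would compute the mixed volume of the Newton polytopes of the $2n$ equations above and show that, for generic $S$, the system has no solutions on the toric boundary (in particular no $a_i$ or $b_i$ vanishes), so that the BKK bound is attained and equals the conjectured number. For an ordinary tridiagonal $K$ the determinant and all cofactors satisfy a clean three-term recurrence with transparent Newton polytopes; the essential complication is that the corner variable $b_n$ destroys this recurrence and injects the extra monomials coming from walks that wind around the cycle --- precisely the terms responsible for the factor $2^{n-2}$. Organising these monomials through the transfer-matrix (continuant) expansion of $\det(K)$ and of each $\Delta_{ij}$, proving that the mixed volume is exactly $(n-3)2^{n-2}+1$, and ruling out boundary solutions uniformly in $n$ is the main computation.

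Should the direct BKK bound prove too coarse, an appealing alternative is to establish the recurrence $\MLdeg(C_n)=2\,\MLdeg(C_{n-1})+2^{n-2}-1$ and conclude by induction from the chordal base case $\MLdeg(C_3)=1$. Such a recurrence ought to arise from a single flat degeneration that resolves the corner variable in two ways and splits $C_n$ into a $C_{n-1}$ together with newly created isolated critical points counted by $2^{n-2}-1$. The advantage is that a flat family with no solutions escaping to or arriving from the boundary controls both inequalities at once; the difficulty, once more, is proving flatness and boundary-emptiness, which is exactly the content that the lower-bound construction alone does not supply.
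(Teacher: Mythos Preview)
The paper does \emph{not} prove this statement: it is recorded as a Conjecture, and what the paper actually establishes is only the lower bound $\MLdeg(C_n)\ge (n-3)2^{n-2}+1$ (Theorem~\ref{ML_odd_bound}). The upper bound remains open. So there is no proof of the full equality to compare against; any comparison must be restricted to the lower-bound half.

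On the upper bound, your proposal is explicitly a plan rather than an argument: you sketch a BKK/mixed-volume route and an alternative recurrence route, flag both as the ``main obstacle,'' and carry neither out. That is an honest assessment, but it means this half is a genuine gap, not a proof. The paper offers nothing here either.

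On the lower bound, your approach (a one-parameter degeneration $S(t)$ that switches off the corner entry $b_n$ and breaks the cycle into chordal pieces, then a ``combinatorially enumerable'' count of surviving branches) is quite different from the paper's, and as written it is too vague to constitute a proof: the family $S(t)$ is not specified, nothing is said about which branches remain distinct and bounded, and the enumeration is asserted rather than performed. The paper's argument is entirely different in mechanism. It evaluates the fiber of $\PP(L_{C_n}^{-1})\dashrightarrow \PP(S^2\CC^n/L_{C_n}^\perp)$ over the \emph{specific} point $\Id$ and exploits two group actions: conjugation by the sign-diagonal matrices $\mathcal D_n^\pm$ and by the cyclic-shift matrices $N_n^\pm$. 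Lemma~\ref{cyclic} shows every point of $L_{C_n}^{-1}\cap(\Id+L_{C_n}^\perp)$ is, up to $\mathcal D_n^\pm$, either $N_n^+$- or $N_n^-$-invariant; Lemmas~\ref{regular}--\ref{singular} then reduce the regular ones to $M_n^\pm(x)^{-1}$ with $x$ a root of an explicit polynomial built from the tridiagonal determinants $P_k(x)$, and classify the singular ones as checkerboard matrices. The roots of $P_k$ are computed exactly via the Chebyshev-type identity $\det\widetilde M_n(\alpha)=\sin((n{+}1)\alpha)/\sin\alpha$ (Proposition~\ref{sin-formula}, Corollary~\ref{roots}), and the orbit sizes under $\mathcal D_n^\pm$ give the factor $2^{n-2}$. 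Semicontinuity then turns this explicit count of isolated points in a special fiber into the desired lower bound. None of your degeneration-to-chordal picture appears; the paper's engine is group symmetry plus exact root counting at one carefully chosen sample matrix.
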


Of special interest is the case when the ML-degree is one (as it holds for the 3-cycle $C_3$), as in this case the MLE can be written as a rational function in the entries of $S$.
\begin{Definition}
    A graph $G$ is \emph{chordal} (or decomposable) if every induced cycle of length at least 4 contains a chord.
\end{Definition}

Chordal graphs are of great interest as they admit a decomposition as a clique sum of complete graphs, and as such one can show that their ML-degree is one \cite[Lemma 1]{StUh}. The converse implication sketched in \cite[Theorem 3]{StUh} is more subtle, and in this paper we provide a detailed argument to show why non-chordal graphs have ML-degree larger than one (verified partially in \cite[Lemma 2.4.7]{uhler2011geometry} for $n$-cycles with $n\leq 12$). Our first Main Theorem \ref{thm:chorML1} is the following.
\begin{TheoremMain}
A graph $G$ is chordal if and only if $\MLdeg (G) = 1$.
\end{TheoremMain}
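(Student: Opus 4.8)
The statement is an ``if and only if'' whose two implications have rather different character, so I would prove them separately.

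\emph{Chordal $\Rightarrow \MLdeg(G)=1$.} This is the direction already indicated in \cite[Lemma~1]{StUh}, and I would carry it out carefully. Write $G$ as an iterated clique sum of complete graphs glued along complete separators. For a complete graph $K_r$ one has $L_{K_r}=S^2(\CC^r)$, hence $L_{K_r}^{-1}=S^2(\CC^r)$ as well, and the unique critical point of the log-likelihood \eqref{eq:lik} over $S^2(\CC^r)$ is $\widehat\Sigma=S$; thus $\MLdeg(K_r)=1$. It then remains to check that the ML-degree of a clique sum $G=G_1\cup_K G_2$ along a complete separator $K$ is the product of the ML-degrees of the two pieces — concretely, the diagonal and edge coordinates indexed by $K$ are common to both factors, and a point of $(S+L_G^{\perp})\cap L_{G}^{-1}$ is exactly a compatible pair of points on the two sides — and then to induct on the number of cliques in the decomposition. (Equivalently, one quotes the closed-form decomposable MLE and notes it is rational in the entries of $S$.)

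\emph{$\MLdeg(G)=1 \Rightarrow$ chordal.} I would argue by contraposition. If $G$ is not chordal, then by definition $G$ contains an induced cycle $C_m$ with $m\ge 4$. By the monotonicity of the ML-degree under passage to induced subgraphs established earlier in this paper, $\MLdeg(G)\ge\MLdeg(C_m)$. Hence it suffices to prove the single inequality
\[
\MLdeg(C_m)\ \ge\ 2\qquad\text{for all } m\ge 4 ,
\]
which then forces $\MLdeg(G)\ge 2\ne 1$.

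\emph{The crux: $\MLdeg(C_m)>1$.} This is where the entire content of the theorem sits, and it is exactly the point at which the historical argument of \cite{StUh} was incomplete — only $m\le 12$ having been verified computationally in \cite[Lemma~2.4.7]{uhler2011geometry}. The equality $\MLdeg(C_m)=1$ is equivalent to the MLE $\widehat\Sigma$ being a rational function of the data stored in $S$, namely of the diagonal entries and the entries along the $m$ cyclic edges; geometrically, to the linear projection of $L_{C_m}^{-1}$ forgetting the non-edge coordinates being birational onto its image. I would show this fails by a direct obstruction built from the explicit form of the inverse of a ``cyclic'' (periodic tridiagonal) symmetric matrix: eliminate all but one of the unknown non-edge entries of $\widehat\Sigma$ and exhibit that the surviving univariate equation over the function field generated by the data is genuinely non-linear — for instance, carries two branches differing by the choice of a square root — for every $m\ge 4$. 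A second, more quantitative route comes for free from the last main result of the paper: the Drton--Sturmfels--Sullivant value $(m-3)\cdot 2^{m-2}+1$ is shown there to be a lower bound for $\MLdeg(C_m)$, and $(m-3)\cdot 2^{m-2}+1\ge 5$ whenever $m\ge 4$.

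\emph{Where the difficulty lies.} Everything outside the crux is organisational: the clique-sum multiplicativity and the induced-subgraph monotonicity are the substantive inputs, but they are isolated results, and the reduction of an arbitrary non-chordal $G$ to a single induced cycle is immediate. The genuine obstacle is to certify $\MLdeg(C_m)>1$ \emph{uniformly in $m$}. Going through a special data matrix $S$ together with a semicontinuity argument requires one to guarantee that the special solutions are isolated and reduced and that no excess positive-dimensional component absorbs the count — that is, to control a resultant or discriminant as a function of $m$; eliminating variables directly in the cyclic-matrix inverse instead requires keeping that elimination under control and proving non-linearity of the resulting polynomial simultaneously for all $m\ge 4$. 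Bridging this gap from ``checked for small $m$'' to ``true for all $m$'' is precisely the missing argument the paper is meant to supply.
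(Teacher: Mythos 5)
Your overall architecture matches the paper's: chordal $\Rightarrow$ ML-degree one via clique sums and multiplicativity of the ML-degree, and the converse by contraposition, reducing via monotonicity on induced subgraphs to the single claim $\MLdeg(C_m)>1$ for all $m\ge 4$. But at exactly that point your proposal stops being a proof. Your first route --- ``eliminate all but one of the unknown non-edge entries and exhibit that the surviving univariate equation is genuinely non-linear for every $m\ge 4$'' --- is a description of what must be done, not a mechanism for doing it uniformly in $m$; you concede as much in your final paragraph, where you say that bridging the gap from ``checked for small $m$'' to ``all $m$'' is precisely the missing argument. Your second route simply quotes the lower bound $(m-3)\cdot 2^{m-2}+1$ from the other main theorem of the paper, which you have not proved and which is itself the technical heart of the matter; as a blind proof this is deferral, not argument.

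What is actually needed, and what the paper supplies, is a concrete degree-counting device together with two explicit facts about one special fiber. The device is Lemma~\ref{lem:plusone}: if the fiber of $\PP(L_{C_m}^{-1})\dashrightarrow \PP(S^2\CC^m/L_{C_m}^{\perp})$ over a point strictly contains $k$ connected components whose closures avoid the base locus, then the degree of the map is at least $k+1$ (proved by Stein factorization of the closure of the graph). The two facts, for the fiber over the identity, are: (i) $\Id$ is an isolated point of that fiber, proved using the explicit $3\times 3$ minor equations of $L_{C_m}^{-1}$ from \cite{conner2023sullivant}, which give $m_{i+1,j}(1-m_{i,i+2}^2)=0$ and hence force all off-diagonal entries to vanish in a small Euclidean neighborhood; and (ii) the fiber contains a point other than $\Id$ --- the checkerboard pattern $m_{i,j}=(i-j)\bmod 2$ when $m$ is even, and when $m$ is odd the inverse of the cyclic tridiagonal matrix $M(x)$ at a nonzero root $x$ of $\det M(x)(\{1\},\{2\})$ chosen, via the factorization in Lemma~\ref{lemmas}(b), to avoid the roots of $\det M(x)(\{1\},\{1\})$. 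You correctly flag that isolatedness and the absence of excess components must be controlled, but flagging the obstruction is not the same as overcoming it; without (i), (ii) and the Stein-factorization lemma, your proof of the hard implication does not exist.
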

It is worth noting that an analogous characterization is known to hold for \emph{discrete} graphical models, see \cite[Theorem 4.4]{GeigerMeekSturmfels}.

Moreover, we prove a lower bound on the ML-degree of the $n$-cycle (see Theorem \ref{ML_odd_bound}), which matches the value from Conjecture~\ref{conj}. We obtain our second Main Theorem \ref{ML_odd_bound}:\\
\textbf{Theorem.} $$\MLdeg(C_n)\geq (n-3)\cdot 2^{n-2}+1.$$

The paper is structured as follows. We prove that the ML-degree behaves monotonically with respect to subgraphs in Section \ref{sec:mld1} (Corollary \ref{cor:subgr}). The other ingredient is to lower bound the ML-degree of the $n$-cycle; we address this in Section \ref{sec:lowerbound}. This completes the characterization of Gaussian graphical models with ML-degree one as precisely the ones corresponding to chordal graphs.

Our results should provide insight towards proving \cite[Conjecture 2.16]{cox2024homaloidal}, which concerns the slightly more general H\"usler-Reiss graphical models and states that the ML-degree one models are still precisely the ones corresponding to chordal graphs.

\section*{Acknowledgements}
Mateusz Micha{\l}ek thanks the Institute for Advanced Study for  a great working environment and support through the Charles Simonyi Endowment. Part of this work happened at the Combinatorial Coworkspace 2024 in Haus Bergkranz. 


\section{ML-degree one and chordal graphs}\label{sec:mld1}
\subsection{ML-degree of induced subgraphs}
In this subsection, we prove that the ML-degree of a graph is greater or equal to the ML-degree of any induced subgraph.
We will denote by $\mathcal{P}_2(n)$ the set of pairs $\{i,j\}$ with $i\le j \in [n]$, and by $\mathcal{N}(v)$ the set of neighbors of the vertex $v$, excluding the vertex $v$ itself.
\begin{Lemma}\label{Ml-subgr}
Let $G$ be a graph with a vertex $v$. Let $H$ be an induced subgraph with the vertex set $V(G)\setminus\{v\}$. Then $$\MLdeg(G)\ge \MLdeg(H).$$
\end{Lemma}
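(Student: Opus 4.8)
The plan is to produce, over a carefully chosen \emph{non-generic} symmetric matrix $S^\ast$, at least $\MLdeg(H)$ points of the intersection $(S^\ast+L_G^\perp)\cap L_G^{-1}$ at which this intersection is \emph{transversal}, and then to perturb $S^\ast$ to a generic $S$. Since transversal intersection points deform and remain distinct, the generic intersection $(S+L_G^\perp)\cap L_G^{-1}$, whose cardinality is $\MLdeg(G)$, will contain at least $\MLdeg(H)$ points.

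To set this up, assume $v=n$ and decompose symmetric $n\times n$ matrices into blocks along $[n-1]$ and $\{n\}$. Writing $W:=V(H)\setminus\mathcal{N}(v)$, one checks that $\left(\begin{smallmatrix} M' & m\\ m^{T} & \mu\end{smallmatrix}\right)$ lies in $L_G$ iff $M'\in L_H$ and $m$ is supported on $\mathcal{N}(v)$ (with $\mu$ arbitrary), and lies in $L_G^\perp$ iff $M'\in L_H^\perp$, $m$ is supported on $W$, and $\mu=0$. I would then fix a \emph{generic} $S'\in S^2(\CC^{n-1})$ and a nonzero scalar $s_{nn}$, put $S^\ast=\left(\begin{smallmatrix} S' & 0\\ 0 & s_{nn}\end{smallmatrix}\right)$, let $\Sigma'_1,\dots,\Sigma'_m$ ($m=\MLdeg(H)$) be the distinct, invertible points of $(S'+L_H^\perp)\cap L_H^{-1}$, and set $K_\ell:=(\Sigma'_\ell)^{-1}\in L_H$. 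The ``trivial extensions'' $\widehat\Sigma_\ell:=\left(\begin{smallmatrix}\Sigma'_\ell & 0\\ 0 & s_{nn}\end{smallmatrix}\right)$ then satisfy $\widehat\Sigma_\ell^{-1}=\left(\begin{smallmatrix}K_\ell & 0\\ 0 & 1/s_{nn}\end{smallmatrix}\right)\in L_G$ and $\widehat\Sigma_\ell-S^\ast=\left(\begin{smallmatrix}\Sigma'_\ell-S' & 0\\ 0&0\end{smallmatrix}\right)\in L_G^\perp$, so the $m$ distinct matrices $\widehat\Sigma_\ell$ are smooth points of $L_G^{-1}$ lying in $(S^\ast+L_G^\perp)\cap L_G^{-1}$.

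The crux is transversality at each $\widehat\Sigma_\ell$, i.e. $T_{\widehat\Sigma_\ell}L_G^{-1}\cap L_G^\perp=0$ (the two dimensions being complementary in $S^2(\CC^n)$). Since $T_{\widehat\Sigma_\ell}L_G^{-1}=\widehat\Sigma_\ell L_G\widehat\Sigma_\ell$, conjugating by $K^\ast:=\widehat\Sigma_\ell^{-1}$ turns this into $L_G\cap K^\ast L_G^\perp K^\ast=0$. A block computation shows that for $M=\left(\begin{smallmatrix}M' & m\\ m^T & 0\end{smallmatrix}\right)\in L_G^\perp$ the matrix $K^\ast MK^\ast$ has blocks $K_\ell M'K_\ell$, $\tfrac1{s_{nn}}K_\ell m$ and $0$, hence lies in $L_G$ exactly when $K_\ell M'K_\ell\in L_H$ and $K_\ell m$ is supported on $\mathcal{N}(v)$. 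The first condition forces $M'=0$ provided the $H$-intersection is transversal at $\Sigma'_\ell$ (i.e. $L_H\cap K_\ell L_H^\perp K_\ell=0$); the second forces $m=0$, since $m$ is supported on $W$, provided the principal submatrix $(K_\ell)_{W,W}$ is invertible. Both properties hold for generic $S'$: transversality of the $m$ points of $(S'+L_H^\perp)\cap L_H^{-1}$ is the standard generic-smoothness statement for $H$, and $\det\!\big((\Sigma'^{-1})_{W,W}\big)$ is not identically zero on the incidence variety $\{(S',\Sigma'):\Sigma'\in L_H^{-1},\ \Sigma'-S'\in L_H^\perp\}$ — at its generic point $\Sigma'^{-1}$ is a generic element of $L_H$, and $L_H$ contains all diagonal matrices, so a generic principal submatrix is nonsingular — whence the locus where transversality fails, or $\Sigma'$ is non-invertible, or $(K_\ell)_{W,W}$ is singular, is a proper closed subvariety and its image in $S^2(\CC^{n-1})$ is not dense.

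Finally, transversality implies that the incidence variety $\{(S,\Sigma):\Sigma\in L_G^{-1},\ \Sigma-S\in L_G^\perp\}$ is smooth at each $(S^\ast,\widehat\Sigma_\ell)$ and that its projection to $S^2(\CC^n)$ is a local analytic isomorphism there; choosing pairwise disjoint neighbourhoods of the $m$ points $(S^\ast,\widehat\Sigma_\ell)$ and a generic $S$ near $S^\ast$ then yields $\MLdeg(G)\ge m=\MLdeg(H)$. The step I expect to be the main obstacle is exactly this transversality analysis — checking that the extended solutions $\widehat\Sigma_\ell$ remain \emph{simple} intersection points — and inside it the genericity claim that $(K_\ell)_{W,W}$ is invertible simultaneously at all $m$ solutions of the $H$-problem.
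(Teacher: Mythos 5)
Your proposal is correct, and its overall strategy coincides with the paper's: extend each of the $\MLdeg(H)$ solutions of the $H$-problem by a diagonal block, show the extended matrices are transversal points of $(S^\ast+L_G^\perp)\cap L_G^{-1}$, and conclude by deformation/semicontinuity. The difference lies in how transversality is verified. The paper parametrizes $L_G^{-1}$ by the inversion map, writes its Jacobian in the cofactor coordinates $\gamma_e^G$, and after identifying the block structure of $J_G(\overline{A}^{-1})$ reduces the claim, via explicit row operations, to the regularity of two matrices: the principal submatrix of $A$ indexed by $\mathcal{N}(v)$ and a bordered copy of $J_H(A^{-1})$. You instead work coordinate-free with $T_\Sigma L_G^{-1}=\Sigma L_G\Sigma$ and conjugate by $\Sigma^{-1}$, which cleanly splits the condition into (i) transversality of the $H$-intersection at $\Sigma'_\ell$ and (ii) invertibility of $(K_\ell)_{W,W}$ with $W=V(H)\setminus\mathcal{N}(v)$. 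Note that (ii) is literally equivalent to the paper's genericity condition: by Jacobi's complementary-minor identity, $\det\bigl((\Sigma_\ell'^{-1})_{W,W}\bigr)=\det\bigl((\Sigma'_\ell)_{\mathcal{N}(v),\mathcal{N}(v)}\bigr)/\det(\Sigma'_\ell)$, and both you and the paper justify it by the same argument (the identity lies in $L_H^{-1}$, resp.\ $L_H$, so the degeneracy locus is proper and a generic $S'$ avoids it at all $\MLdeg(H)$ solutions simultaneously). Your version buys a shorter, more conceptual computation and makes the role of the induction hypothesis (transversality for $H$) transparent; the paper's version is more explicit and self-contained about why the relevant Jacobian block is exactly $J_H$. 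Your closing perturbation argument is a local-analytic reformulation of the paper's appeal to lower semicontinuity of the number of isolated intersection points; either suffices.
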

\begin{proof}

For an $n\times n$ matrix $M$, let us denote by $\overline{M}$ the extended matrix: $$\overline{M}=\begin{pmatrix}
M&0\\
0&1
\end{pmatrix}.$$

Let $n:=|V(H)|=|V(G)|-1$. Consider a generic symmetric matrix $S\in S^2(\CC^n)$. From the definition of the ML-degree, we know that the set $(S+L_H^\perp)\cap L_H^{-1}$ consists of $\MLdeg(H)$ many smooth points.

Consider any point $A\in ({S}+L_H^\perp)\cap L_H^{-1}$. We will show that the extended point $\overline{A}$ lies in $(\overline{S}+L_G^\perp)\cap L_G^{-1}$ and that it is smooth, and therefore an isolated point in the intersection.
This will imply that the set $(\overline{S}+L_G^\perp)\cap L_G^{-1}$ contains at least $\MLdeg(H)$ isolated points from which we can conclude that $\MLdeg(G)\ge \MLdeg(H)$. As $\overline{S}$ is not generic, we cannot conclude that the cardinality of the intersection  $(\overline{S}+L_G^\perp)\cap L_G^{-1}$ equals $\MLdeg(G)$. However, having sufficiently many isolated points in the intersection proves the desired inequality.

Since $S$ is generic, we may assume that the matrix $A$ is regular. Therefore, it has an inverse and $A^{-1}\in L_H$. Clearly, $\overline{A^{-1}}\in L_G$. Moreover, $\overline{A^{-1}}^{-1}=\overline{A}$. This means that $\overline{A}$ is regular and belongs to $L_G^{-1}$.
Similarly, we have $A-S\in L_H^\perp$ which implies $\overline{A}-\overline{S}\in L_G^\perp$. It remains to show that $\overline{A}$ is a smooth point in the intersection.

 Let $D=\{\{i,i\}:i=1,\dots, |V(G)|\}$ be a collection of multisets, each one identified with a loop at the $i$-th vertex of $G$. Since every non-zero, non-diagonal entry in a matrix from $L_G$ corresponds to an edge of $G$ we can naturally parametrize the space $L_G$ by $\{x_e\}_{e\in E(G)\cup D}$. Let us denote by $M_G$, the general parametrizing matrix of $L_G$, i.e $M_{(i,j)}=0$, if $\{i,j\}\not\in E(G)\cup D$, and $M_{(i,j)}=x_e$, if $e=\{i,j\}\in E(G), i\neq j$ and $M_{(i,i)}=2x_{\{i,i\}}$. \footnote{The coefficient two for parametrizing diagonal elements is there only for technical reasons since it makes the computations simpler to write.}

 For any matrix $M$ and two sets $I,I'\subset\{1,\dots,n\}$ we denote by $M(I,I')$ the matrix obtained by deleting the rows indexed by $I$ and columns indexed by $I'$.

 For a pair $e=\{i,j\}$ let us denote by $\gamma_e^G$ the polynomial $$\gamma_e^G=(-1)^{i+j}\det(M_G(\{i\},\{j\})),$$
that is, $\gamma_e^G$ is the signed $(n-1)\times (n-1)$ minor of $M_G$.  Note that we do not require $e$ to be an edge of $G$ in this definition.

 Similarly, for two pairs $e_1=\{i_1,j_1\},e_2=\{i_2,j_2\}$, we denote by
 \begin{small} $$\gamma^G_{e_1,e_2}=(-1)^{i_1+i_2+j_1+j_2}\det(M_G(\{i_1,i_2\},\{j_1,j_2\}))+(-1)^{i_1+i_2+j_1+j_2}\det(M_G(\{i_1,j_2\},\{j_1,i_2\})).$$ \end{small}
In the case that the two elements of a set (e.g.~$i_1=i_2$) are equal, we consider the determinant to be zero.

 To simplify the computation we pass to the projective space $\PP(S^2(\CC^n))$. It is sufficient to show that $\overline{A}$ is smooth after projectivization. The rational map $\varphi : L_G\rightarrow L_G^{-1}$ which sends a matrix to its inverse, is given in coordinates by the polynomials $\gamma^G_e$ for $e\in \mathcal P_2(n)$. We recall that, as $\overline{A}$ is invertible, the map $\varphi$ is an isomorphism of a neighbourhood of $\overline{A}^{-1}$ and a neighbourhood of $\overline{A}$ intersected with $L_G^{-1}$.
 Thus, to show that point $\overline{A}$ is smooth in the intersection we need to prove transversality of two tangent spaces. The first one is the tangent space that is the row span of the Jacobian matrix $J_\varphi$ of the map $\varphi$ evaluated at $\overline{A}^{-1}$.  The second one is simply $L_G^{\perp}$. We can split $J_\varphi$ in two submatrices $ J_\varphi^{E(G)}$ and $J_\varphi^{NE(G)}$, such that the matrix $ J_\varphi^{E(G)}$ consists only of rows that are partial derivatives of $\gamma^G_e$ for $e\in E(G)\cup D$ and $J_\varphi^{NE(G)}$ consists of partial derivatives of $\gamma_e^G$ for $e\not\in E(G)\cup D$. The latter corresponds to coordinates on $L_G^{\perp}$.

Thus, the point $\overline{A}$ is smooth in the intersection 
%
%
if and only if the matrix $J_\varphi^{E(G)}$ evaluated at $\overline{A}^{-1}$ is regular. From now on, we denote $J_\varphi^{E(G)}$  simply by $J_G$. It is a square matrix of size $|E(G)\cup D|\times |E(G)\cup D|$ and entries $\frac{\partial \gamma_e^G}{\partial x_{f}}$ for $e,f\in E(G)\cup D$. Note that $$\frac{\partial \gamma_e^G}{\partial x_{f}}=\gamma_{e,f}^G.$$

We will compute these partial derivatives by considering the following cases:
\begin{itemize}
    \item Suppose $e,f$ are also edges of $H$ or loops not on $v$, i.e.~they do not contain a vertex $v$. Then $\gamma_{e,f}^G (\overline{A}^{-1})=\gamma_{e,f}^H (A^{-1})$. This follows from the fact $\det(M)=\det(\overline{M})$ for any matrix $M$.
    \item Suppose that $e$ is an edge of $H$, $f$ is not an edge and, moreover, $f$ is not a loop,  i.e $f=\{i,n\}$. Then we have that $\gamma_{e,f}^G=0$. To see this, note that after deleting the $n$-th row from $\overline{A}^{-1}$, the last column will be 0, and therefore any determinant would be 0.
    \item Suppose that both of $e,f$ are not edges of $H$ and that they are not loops, i.e. $e=\{i,n\}$, $f=\{j,n\}$. Then $\gamma_{e,f}^G=\gamma_{\{i,j\}}^H$.
    \item Suppose that $e$ is an edge  of $H$ and $f$ is a loop on $v$. Then $\gamma_{e,f}^G=\gamma_e^H$.
    \item Suppose that both $e,f$ contains $v$ and $f$ is a loop. Then $\gamma_{e,f}^G=0$ by definition.
\end{itemize}

Thus, the matrix $J_G(\overline{A}^{-1})$ looks as follows:

$$J_G(\overline{A}^{-1})=\begin{pmatrix}
    J_H&0&(\gamma_e^H)_{e\in E(H)}\\
    0&K&0\\
    (\gamma_e^H)_{e\in E(H)}&0&0
\end{pmatrix}(A^{-1})$$
where $K$ is the matrix whose entries are $\gamma_e^H$ for $e\in P_2(\mathcal N(v))$.

Therefore, it is sufficient to show the regularity of the two matrices
\begin{equation}\label{eq:twomatrices}
    K(A^{-1}) \text{ and } \begin{pmatrix}
    J_H&(\gamma_e^H)_{e\in E(H)}\\
    (\gamma_e^H)_{e\in E(H)}&0&
\end{pmatrix}(A^{-1}).
\end{equation}

We begin by showing that the first matrix $K$ evaluated at $A^{-1}$ is regular. Note that $K(A^{-1})$ is equal to a submatrix of $A$ where one takes the rows and columns indexed by $\mathcal N(v)$ (up to scaling by $\det(A)$). However, the set of all matrices in $L_H^{-1}$ for which this submatrix is singular forms a subvariety of $L_H^{-1}$. It is not equal to the whole $L_H^{-1}$ since $\Id\in L_H^{-1}$ and any square submatrix along the diagonal of the identity is regular. Since $S$ is generic, we may assume that the intersection $(S+L_H^\perp)\cap L_H^{-1}$ does not contain any point from this proper closed subvariety. Thus, for generic $S$, $K(A^{-1})$ is regular.

Now we look at the other matrix. We perform the following row operations on it: we multiply the last row by $(n-1)$ and, for any $e\in E(H)$, we subtract $(A^{-1})_e$-multiple of the row indexed by $e$ from the last row.

In this way, the entry in the last row and $f=\{i_0,j_0\}$-th column after these operations~is
\begin{small}
\begin{align*}
    \left((n-1)\gamma_f^H \vphantom{\sum_{e\in E(H)}} \right. &- \left.\sum_{e\in E(H)} (A^{-1})_e \gamma^H_{e,f}\right)(A^{-1})= (n-1)\gamma_f^H(A^{-1}) - (-1)^{i_0+j_0} \cdot\\  & \sum_{\{i,j\}\in E(H)}(A^{-1})_{i,j}\left( (-1)^{i+j}\det(A^{-1}(\{i_0,i\},\{j_0,j\}))+(-1)^{i+j}\det(A^{-1}(\{i_0,j\},\{j_0,i\}))\right) \\
    &=(n-1)\gamma_f^H(A^{-1})-(-1)^{i_0+j_0}\sum_{\substack{1\le i\le n\\i\neq i_0}}\sum_{\substack{1\le j\le n\\j\neq j_0}}(A^{-1})_{i,j}\left( (-1)^{i+j}\det(A^{-1}(\{i_0,i\},\{j_0,j\}))\right) \\
    &=(n-1)\gamma_f^H(A^{-1})-(-1)^{i_0+j_0}\sum_{\substack{1\le i\le n\\i\neq i_0}}^n\det(A^{-1}(\{i_0\},\{j_0\})) \\
    &=(n-1)\gamma_f^H(A^{-1})-(-1)^{i_0+j_0}(n-1)\det(A^{-1}(\{i_0\},\{j_0\}))=0.
\end{align*}
\end{small}


On the other hand, the value of the bottom right corner is

\begin{align*}
    0-\sum_{e\in E(H)}(A^{-1})_e\gamma^H_{e}(A^{-1})&=
    -\sum_{i=1}^n\sum_{j=1}^n (-1)^{i+j}\det(A^{-1}(\{i\},\{j\}) \\
    &=-\sum_{i=1}^n \det(A^{-1})=n\det(A^{-1})\neq 0.
\end{align*}


Thus, after the row operations the second matrix in \eqref{eq:twomatrices} has the form
$$\begin{pmatrix}
    J_H(A^{-1})&(\gamma_e^H)_{e\in E(H)}(A^{-1})\\
    0&n\det(A^{-1})&
\end{pmatrix}.$$
This allows us to conclude that the matrix is regular because the matrix $J_H(A^{-1})$ is itself regular thanks to $A$ being a smooth point in $L_H^{-1}\cap (S+L_H^{\perp})$.
We can now conclude the regularity of the matrix $J_G(\overline{A}^{-1})$, which completes the proof.
\end{proof}

As a direct consequence, we obtain the following important corollary.
\begin{Corollary}\label{cor:subgr}
Let $G$ be a graph and $H$ an induced subgraph. Then $$\MLdeg(G)\ge \MLdeg(H).$$
\end{Corollary}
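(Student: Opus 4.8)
The plan is to deduce the general statement from the single-vertex case in Lemma~\ref{Ml-subgr} by a straightforward induction on the number of deleted vertices. Write $V(G)\setminus V(H)=\{v_1,\dots,v_k\}$ in any fixed order, set $G_0:=G$, and for $1\le i\le k$ let $G_i$ be the induced subgraph of $G_{i-1}$ on the vertex set $V(G_{i-1})\setminus\{v_i\}=V(G)\setminus\{v_1,\dots,v_i\}$. By construction $G_i$ is precisely the induced subgraph of $G_{i-1}$ obtained by deleting the single vertex $v_i$, so Lemma~\ref{Ml-subgr} applies at each step and gives $\MLdeg(G_{i-1})\ge\MLdeg(G_i)$.

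Next I would observe that the iterated construction terminates at $H$: since passing to induced subgraphs is transitive — the edges of $G_i$ are exactly the edges of $G$ with both endpoints in $V(G)\setminus\{v_1,\dots,v_i\}$ — we have $G_k=H$. Chaining the inequalities from the previous paragraph then yields
\[
\MLdeg(G)=\MLdeg(G_0)\ge\MLdeg(G_1)\ge\cdots\ge\MLdeg(G_k)=\MLdeg(H),
\]
which is the desired conclusion. If $V(G)=V(H)$ there is nothing to prove, so this also handles the base case $k=0$.

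There is essentially no obstacle here: all of the content is in Lemma~\ref{Ml-subgr}, and the only thing that needs checking is the bookkeeping that each intermediate graph $G_i$ is genuinely an induced subgraph of its predecessor whose vertex set omits exactly one vertex, which is immediate from the definition of $G_i$.
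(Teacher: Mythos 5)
Your proof is correct and is exactly the paper's argument: the paper disposes of the corollary in one line ("Follows from Lemma~\ref{Ml-subgr} by induction on $|V(G)|-|V(H)|$"), and your vertex-by-vertex chain of inequalities is just that induction written out explicitly. The bookkeeping you check (that each $G_i$ is an induced subgraph of $G_{i-1}$ missing one vertex, and that $G_k=H$) is the right and only content of the reduction.
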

\begin{proof}
Follows from Lemma~\ref{Ml-subgr} by induction on $|V(G)|-|V(H)|$.
\end{proof}

\subsection{Lower bounds on number of points in the fiber}
For a rational map $f:X\dashrightarrow Y$ and $y\in Y$ we call the \emph{fiber over $y$} the locus of $x\in X$, such that $f$ is well defined on $x$ and $f(x)=y$. Assuming $X$ is projective, the fiber over $y$ union the base locus is a closed subset of $X$.
\begin{Lemma}\label{lem:plusone}
Let $f:X\dashrightarrow Y$ be a rational, dominant, generically finite map of degree $d$ of projective varieties. Let $y\in Y$. Suppose that the fiber $f^{-1}(y)$ over $y$ strictly contains at least $k$ connected components whose closure does not intersect the base locus. Then $d\geq k+1$.
\end{Lemma}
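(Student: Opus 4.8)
The plan is to replace the rational map $f$ by an honest proper morphism and then exploit that a connected component of a fibre of a proper morphism cannot ``disappear'' under a small perturbation of the base point; this forces the $k$ given components together with the surplus part of $f^{-1}(y)$ to each contribute at least one point to a general fibre. Concretely, I would first reduce to the case of a morphism: let $\Gamma\subseteq X\times Y$ be the closure of the graph of $f$, so that $\pi=\mathrm{pr}_X\colon\Gamma\to X$ is projective and birational (an isomorphism over $U:=X\setminus\Bs f$) and $\tilde f=\mathrm{pr}_Y\colon\Gamma\to Y$ is a proper, dominant, generically finite morphism of degree $d$; write $E:=\pi^{-1}(\Bs f)$, so that over $\Gamma\setminus E\cong U$ the morphism $\tilde f$ coincides with $f$. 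Since the statement concerns only $d$, it suffices to prove $d\ge k+1$ for $\tilde f$. Now a connected component $Z_i$ of the locally closed set $f^{-1}(y)$ is open and closed in it, and the hypothesis $\overline{Z_i}\cap\Bs f=\varnothing$ forces $\overline{Z_i}=Z_i$; hence $Z_i$ is a closed subvariety of $X$ contained in $U$, which via $\pi$ becomes a closed subvariety of $\Gamma$ disjoint from $E$ and a connected component of $\tilde f^{-1}(y)$. Because $f^{-1}(y)$ \emph{strictly} contains $Z_1\cup\dots\cup Z_k$, there is a point $p\in\tilde f^{-1}(y)\setminus(Z_1\cup\dots\cup Z_k)$; listing all the (finitely many, compact in the analytic topology) connected components of $\tilde f^{-1}(y)$ as $Z_1,\dots,Z_k,Z_{k+1},\dots,Z_m$ with $p\in Z_{k+1}$, we have $m\ge k+1$.

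Next I would run the persistence-and-counting argument in the analytic topology. Since $\tilde f$ is proper, hence closed, I can choose pairwise disjoint open neighbourhoods $\Omega_1,\dots,\Omega_m$ of $Z_1,\dots,Z_m$ in $\Gamma^{\mathrm{an}}$ and a connected open neighbourhood $N$ of $y$ in $Y^{\mathrm{an}}$ with $\tilde f^{-1}(N)\subseteq\Omega_1\cup\dots\cup\Omega_m$, taking $N$ in the complement of the closed set $\tilde f\bigl(\Gamma^{\mathrm{an}}\setminus\bigcup_j\Omega_j\bigr)$, which avoids $y$. Then $\Gamma_j:=\tilde f^{-1}(N)\cap\Omega_j$ decompose $\tilde f^{-1}(N)$ into pairwise disjoint pieces, each open in $\Gamma$ and closed in $\tilde f^{-1}(N)$, so each $\tilde f\colon\Gamma_j\to N$ is proper. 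As $\Gamma$ is irreducible of dimension $n=\dim Y$, every non-empty open $\Gamma_j$ is dense in $\Gamma$ and of dimension $n$; since $\tilde f$ is generically finite, $\tilde f(\Gamma_j)$ has dimension $n$, and by the proper mapping theorem it is a closed analytic subset of the $n$-dimensional irreducible $N$, hence equals $N$. Taking a general $y'\in N$ — which is a general point of $Y$, as $N$ is Zariski dense — we get $\#\tilde f^{-1}(y')=d$ together with $\tilde f^{-1}(y')=\bigsqcup_{j=1}^m\bigl(\tilde f^{-1}(y')\cap\Gamma_j\bigr)$ and all $m$ intersections non-empty, so $d\ge m\ge k+1$, as desired; since the passage from $f$ to $\tilde f$ did not change $d$, this proves the lemma.

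The hard part is exactly the geometric input of the second step: that \emph{every} connected component of the special fibre — in particular a surplus component that may be positive-dimensional and contracted by $\tilde f$ — really does show up in a general nearby fibre. The hypothesis that the closures of the $Z_i$ miss the base locus is what makes the passage from $f$ to $\tilde f$ harmless, keeping the $Z_i$ out of the exceptional locus $E$, and the irreducibility of $\Gamma$ is what forces each chamber $\Gamma_j$ to be dense in $\Gamma$ and hence to dominate $Y$. The one point needing care is local irreducibility of $Y$ at $y$, used to pass from ``$\tilde f(\Gamma_j)$ is an $n$-dimensional closed analytic subset of $N$'' to ``$\tilde f(\Gamma_j)=N$''; this is automatic in the intended applications (where $Y$ is a projective space), and in general one would either add this hypothesis or argue one analytic branch of $Y$ at $y$ at a time.
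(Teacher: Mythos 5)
Your argument is correct and starts exactly where the paper's does: both pass to the closure $\Gamma\subset X\times Y$ of the graph, and both use the hypothesis on the base locus only to ensure that the $k$ given components survive as honest connected components of the fiber of the proper morphism $\mathrm{pr}_Y\colon\Gamma\to Y$, with strictness supplying a $(k+1)$-st component. Where you genuinely diverge is in the key step, namely that every connected component of the special fiber contributes at least one point to a general fiber. The paper gets this in two lines from Stein factorization: writing $\mathrm{pr}_Y=h\circ g$ with $g$ having connected fibers and $h$ finite, the $k+1$ components give $k+1$ points of $h^{-1}(y)$, so $h$ has degree at least $k+1$ and a general fiber of $f$ has at least $k+1$ points. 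You instead prove the persistence statement by hand in the analytic topology, separating the components by disjoint opens, shrinking the base so that the whole preimage falls into those opens, and using irreducibility of $\Gamma$ together with the proper mapping theorem to show that each chamber dominates $Y$. Your route is more elementary and self-contained, at the cost of length and of working analytically; the paper's is shorter but imports a standard theorem. Both arguments quietly rely on $Y$ being unibranch (e.g.\ normal) at $y$ --- the normalization of a nodal cubic, with $y$ the node, satisfies the hypotheses with $k=1$ but has $d=1$, so the lemma as literally stated needs this assumption --- and you are the only one of the two to flag it; it is harmless here because in the application $Y$ is a projective space.
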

\begin{proof}
Let $\Gamma\subset X\times Y$ be the closure of the graph of $f$. Apply Stein factorization \cite[Chapter III, Corollary 11.5]{Hartshorne} to the map: $\pi_2:\Gamma\rightarrow Y$, obtaining $g:\Gamma\rightarrow Y'$ and $h:Y'\rightarrow Y$, where $h$ is finite and $g$ has connected fibers. By assumptions, the preimage $h^{-1}(y)$ must contain at least $k$ points, corresponding to the $k$ connected components, and one more point, as the $k$ components are \emph{strictly} contained in the fiber. Thus, $h$ is of degree at least $k+1$. Hence, the preimage of a general point in $Y$ by $h$ contains at least $k+1$ points, which give rise to at least $k+1$ points in the general fiber of $f$.
\end{proof}
\subsection{Characterization of graphical models with ML-degree one}
\begin{Theorem}\label{thm:chorML1}
A graph $G$ is chordal if and only if $\MLdeg (G) = 1$.
\end{Theorem}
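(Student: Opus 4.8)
The plan is to prove the two directions separately, and in fact most of the work is already packaged into the preceding results.

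\textbf{The easy direction ($G$ chordal $\Rightarrow \MLdeg(G)=1$).} This is classical and I would simply invoke it: a chordal graph decomposes as a clique sum of complete graphs, and for clique sums the concentration model factors nicely, giving a rational MLE; see \cite[Lemma 1]{StUh}. Since a rational MLE forces exactly one complex critical point for generic data, $\MLdeg(G)=1$. I would state this in one or two sentences with the citation rather than reproving it.

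\textbf{The hard direction ($\MLdeg(G)=1 \Rightarrow G$ chordal).} I would argue by contrapositive: if $G$ is not chordal, it contains an induced cycle $C_n$ with $n\geq 4$. By Corollary~\ref{cor:subgr}, $\MLdeg(G)\geq \MLdeg(C_n)$, so it suffices to show $\MLdeg(C_n)>1$ for all $n\geq 4$. For $n=4,5$ one can cite the known explicit values ($5$ and $17$). For the general case the strategy is to apply Lemma~\ref{lem:plusone} to the rational map $f:\PP(L_{C_n})\dashrightarrow \PP(L_{C_n}^{-1})$ given by matrix inversion (equivalently, the adjugate), whose degree is exactly $\MLdeg(C_n)$ when composed with the relevant linear projection, or more precisely to the map whose fiber over a generic point computes the ML-degree. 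Concretely, one picks a well-chosen non-generic but still finite-fiber point $y$ (for instance corresponding to a matrix with a controlled pattern of zeros, or a degenerate sample covariance matrix $S$) in the target, exhibits two or more distinct points in the fiber $f^{-1}(y)$ whose closures avoid the base locus, and checks that the fiber strictly contains these components. Lemma~\ref{lem:plusone} then yields $\MLdeg(C_n) = d \geq k+1 \geq 2$. The natural source of the extra points is a symmetry of the cycle: the dihedral group acts on $C_n$, and for a symmetric choice of data there will be a nontrivial orbit of critical points, or alternatively one uses the reducibility of some fiber of the adjugate map restricted to $L_{C_n}$.

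\textbf{Main obstacle.} The delicate point is producing a point $y$ in the target together with a verified lower bound of $k\geq 1$ genuinely isolated components in its fiber \emph{disjoint from the base locus}, while simultaneously knowing the fiber is strictly larger than those components — this is exactly the hypothesis of Lemma~\ref{lem:plusone} and it is where one must do real work for the cycle. One has to identify the base locus of the inversion map restricted to $L_{C_n}$ (the non-invertible matrices in $L_{C_n}$), choose $y$ so that at least one honest critical point sits in its preimage away from that locus, and argue the preimage contains something more (e.g.\ a positive-dimensional component, or a second point forced by symmetry). I expect this to require an explicit small computation for a base case and a propagation/degeneration argument, or a clean invariant-theoretic observation about the dihedral action, and it is the step most likely to need care rather than the formal application of Lemmas~\ref{cor:subgr} and~\ref{lem:plusone}.
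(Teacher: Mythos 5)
Your outline coincides with the paper's strategy -- the chordal direction via clique sums and \cite{StUh}, the reduction to an induced cycle via Corollary~\ref{cor:subgr}, and the use of Lemma~\ref{lem:plusone} on a special fiber -- but the proposal stops exactly where the actual mathematical content begins, and you say so yourself. Concretely, three things are missing. First, the map must be pinned down: the relevant map is not inversion $\PP(L_{C_n})\dashrightarrow\PP(L_{C_n}^{-1})$ (which is birational onto its image) but the linear projection $\PP(L_{C_n}^{-1})\dashrightarrow \PP(S^2\CC^n/L_{C_n}^{\perp})$, whose generic fiber has $\MLdeg(C_n)$ points; the paper takes the fiber over the class of the identity, i.e.\ $L_{C_n}^{-1}\cap(\Id+L_{C_n}^{\perp})$. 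Second, one must prove that the identity is an \emph{isolated} point of that fiber; this is not formal -- the paper does it by invoking the explicit $3\times 3$-minor generators of the ideal of $L_{C_n}^{-1}$ from \cite{conner2023sullivant} and propagating the vanishing $m_{i+1,j}(1-m_{i,i+2}^2)=0$ around the cycle in a small Euclidean neighborhood. Third, one must exhibit a second point of that fiber. For even $n$ the checkerboard matrix works immediately, but for odd $n$ this is the genuinely delicate step: the paper produces it from the one-parameter family $M(x)$ (tridiagonal plus corner entries) and needs the determinant identities for $P_n(x)$, specifically the factorization in Lemma~\ref{lemmas}(b), to find a nonzero $x$ with $\det M(x)(\{1\},\{2\})=0$ while $\det M(x)\neq 0$.

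I would also caution against the proposed mechanism for finding the extra fiber points. The identity is fixed by the entire symmetry group of the model, so a ``nontrivial dihedral orbit of critical points'' cannot by itself produce a second point in the fiber over $[\Id]$; indeed the second point the paper constructs for odd $n$ is itself cyclically invariant, and the sign-change group $\mathcal D_n^{\pm}$ only enters later, in Section~\ref{sec:lowerbound}, to \emph{count} fiber points for the quantitative bound. So the symmetry heuristic, as stated, does not close the gap; what closes it is the explicit algebra of the polynomials $P_n(x)$. As written, the proposal is a correct plan with the two load-bearing steps left unproved.
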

\begin{proof}
The implication that chordal graphs have ML-degree equal to one was already proved by Sturmfels and Uhler. Indeed, any chordal graph is a clique sum of complete graphs. The ML-degree of a clique is one and the ML-degree is multiplicative with respect to clique sums \cite[Lemma 4.2]{StUh}.

We now prove the other implication. By definition, every graph that is not chordal contains an induced chordless cycle $H$ of length $n>3$. By Corollary \ref{cor:subgr} it is enough to prove that $\MLdeg(H)>1$. Our proof is by looking at the fiber of the map $\PP(L_H^{-1})\dashrightarrow \PP(S^2\CC^n/L_H^{\perp})$ over the (class of the image of the) identity. In the next section we prove how many isolated points there are in the fiber, however this requires a quite technical argument. Here, for the sake of simplicity, we sketch a qualitative, but not quantitative, simpler argument. By Lemma~\ref{lem:plusone}, it is enough to prove the following two statements:
\begin{enumerate}
\item identity is an isolated point in the fiber,
\item there exists a point in the fiber, that is different from the identity.
\end{enumerate}
To prove the first point, parameterize the neighborhood of the identity in the fiber by symmetric matrices $(m_{i,j})_{1\leq i\leq j\leq n}$, where $m_{i,i}=1$, $m_{i,i+1}=0$, $m_{1,n}=0$ and the other parameters are free variables. As we pick a small Euclidean neighborhood, we may assume $|m_{i,j}|<1$ for $i\neq j$. Our aim is to prove that $m_{i,j}=0$ when $i\neq j$, if $(m_{i,j})\in \PP(L_H^{-1})$.
By \cite[Theorem 1.1]{conner2023sullivant}, we know the polynomial equations that $m_{i,j}$ must satisfy. In particular, for any $1\leq i\leq n$ and $j\neq i+1$ the minor of the $3\times 3$ submatrix  of $(m_{i,j})$ with columns $i,i+1,i+2$ and rows $i,i+2,j$, where we use cyclic notation modulo $n$, must vanish. Hence, we have, $m_{i+1,j}(1-m_{i,i+2}^2)=0$, which under our assumptions implies $m_{i+1,j}=0$. This finishes the proof of the first point.

For the second point, if $n$ is even, we may take $m_{i,j}=(i-j)$ mod $2$, as a second point in the fiber.
If $n$ is odd, the argument is more involved and here we present just a sketch of the proof, referring for more detailed results to Section~\ref{sec:lowerbound}. We consider a family of matrices $M(x)=(m_{i,j})\in L_H$ parameterized by $x$, where $m_{i,i}=1$, $m_{i,i+1}=x$, $m_{1,n}=x$ and $m_{i,j}=0$ otherwise. Our aim is to find $x\neq 0$ such that $\det M(x)\neq 0$, $\det M(x)(\{1\},\{1\})\neq 0$ and $\det M(x)(\{1\},\{2\})=0$, where $M(x)(\{i\},\{j\})$ is $M(x)$ with the $i$-th row and the $j$-th column removed. Indeed, in such a case $M(x)^{-1}$ is a point in the fiber that is not the identity matrix. As $n$ is odd, we have $\det M(x)(\{1\},\{2\})=-\det M(x)(\{1\},\{n\})$. Thus, $\det M(x)= M(x)(\{1\},\{1\})$ and it is enough to prove that there exists a nonzero root of $\det M(x)(\{1\},\{2\})$ that is not a root of $\det M(x)(\{1\},\{1\})$. For this, we note that $ M(x)(\{1\},\{2\})$ has first row divisible by $x$. After dividing by $x$ and setting $x=0$ the determinant is nonzero. Thus $x=0$ is a simple root of  $\det M(x)(\{1\},\{2\})$ and there must exist other roots. In Lemma~\ref{lemmas}(b) we prove that $\det M(x)(\{1\},\{2\})$ is a product of two polynomials. One of them divides $\det M(x)(\{1\},\{1\})$ and the other is coprime with it. Thus, by taking $x$ to be a root of the second polynomial we conclude the proof.
\end{proof}

In the next section we provide an explicit lower bound for the ML-degree of the general $n$-cycle $C_n$. 

\section{Lower bound on ML-degree of cyclic models}\label{sec:lowerbound}
In this section, we prove a lower bound for the ML-degree of a cycle, which partially confirms a conjecture of Drton, Sturmfels, and Sullivant, \cite[Section 7.4]{drtonSS}. Our main task is to achieve a good understanding of the set $L_{C_n}^{-1}\cap (\Id+L_{C_n}^\perp)$, as the cardinality of this set lower bounds the given ML-degree. In the first step, we introduce groups that naturally act on our algebraic sets, which will be used later for symmetry arguments.

\subsection{General facts}

Consider the set $\mathcal D_n^{\pm}$ of all $n\times n$ diagonal matrices whose diagonal entries are equal to $\pm 1$. This is a group under multiplication and for any $D\in\mathcal D_n^\pm$, we have $D=D^{-1}$. This group acts on $S^2(\CC^n)$ by conjugation. Clearly, the spaces $L_{C_n}$ and $L_{C_n}^{\perp}$ are invariant with respect to this group action. In addition, the set $L_{C_n}^{-1}$ is also invariant, since $(DAD)^{-1}=DA^{-1}D$ for all $D\in\mathcal D^{\pm}_n$ and $A\in S^2(\CC^n)$.

Thus, the intersection $L_{C_n}^{-1}\cap (\Id+L_{C_n}^\perp)$ is also invariant. This allows us to characterize the points in this intersection up to the action of the group $\mathcal D_n^\pm$.
Let us denote by $D_n^i$ the element of $\mathcal D^{\pm}_n$ with only one entry equal to $-1$, which is precisely in the $i$-th row.

Let us consider another group action, namely the action by cyclic shift. For this, we consider the following matrices:

$$N_n^+:=\begin{pmatrix}
0&1&0&\dots&0&0&0\\
0&0&1&\dots&0&0&0\\
0&0&0&\dots&0&0&0\\
\vdots&\vdots&\vdots&\ddots&\vdots&\vdots&\vdots\\
0&0&0&\dots&0&1&0\\
0&0&0&\dots&0&0&1\\
1&0&0&\dots&0&0&0
\end{pmatrix}, \
N_n^-:=\begin{pmatrix}
0&1&0&\dots&0&0&0\\
0&0&1&\dots&0&0&0\\
0&0&0&\dots&0&0&0\\
\vdots&\vdots&\vdots&\ddots&\vdots&\vdots&\vdots\\
0&0&0&\dots&0&1&0\\
0&0&0&\dots&0&0&1\\
-1&0&0&\dots&0&0&0
\end{pmatrix}.
$$

Note that $(N_n^+)^{-1}=(N_n^+)^T$ and $(N_n^-)^{-1}=(N_n^-)^T$. Thus, (the groups generated by) these matrices act on $S^2(\CC^n)$ by conjugation and $L_{C_n}$ is invariant with respect to that action.
We say that a matrix $A$ is \textit{$N_n^+$-invariant} if $N_n^+A(N_n^+)^{-1}=A$. Analogously, $A$ is \textit{$N_n^-$-invariant} if $N_n^-A(N_n^-)^{-1}=A$.

From now on, we will denote the entries of the matrix $A$ by $a_{i,j}$ where $0\le i,j\le n-1$, i.e. we index rows and columns from 0. In general we will consider indices modulo $n$, so that $a_{n+i,n+j}=a_{i+j}$.


\begin{Lemma}\label{cyclic}
Let $A\in L_{C_n}^{-1}\cap (\Id+L_{C_n}^\perp)$. Then there exists a matrix $D\in\mathcal D_n^\pm$ such that the matrix $DAD$ is either $N_n^+$-invariant or $N_n^-$-invariant.
\end{Lemma}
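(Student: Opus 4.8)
My plan is to pass to the inverse matrix $B:=A^{-1}$. Since $A\in L_{C_n}^{-1}$ we have $B\in L_{C_n}$; that is, $B$ is the symmetric cyclic-tridiagonal matrix with diagonal entries $d_i:=b_{i,i}$ and sub/super-diagonal entries $c_i:=b_{i,i+1}$, indices read modulo $n$. The hypothesis $A\in\Id+L_{C_n}^{\perp}$ says precisely that $a_{i,i}=1$ and $a_{i,i+1}=0$ for all $i$. Conjugating $A$ by $D=\diag(\delta_0,\dots,\delta_{n-1})\in\mathcal D_n^{\pm}$ turns $B$ into $DBD$, which has the same diagonal and off-diagonal entries $\delta_i\delta_{i+1}c_i$; and since passing to the inverse commutes with being $N_n^{+}$- or $N_n^{-}$-invariant, $DAD$ is $N_n^{+}$- (resp.\ $N_n^{-}$-) invariant if and only if $DBD$ is circulant (resp.\ skew-circulant). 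A short computation with the $\delta_i$ then shows that some $D\in\mathcal D_n^{\pm}$ makes $DBD$ circulant or skew-circulant if and only if all $d_i$ are equal and all $c_i^2$ are equal (the choice $\delta_{i+1}=\pm\operatorname{sign}(c_i)\,\delta_i$ does it, and which of the two normalizations is reached is governed by the parity of $n$ together with $\prod_i\operatorname{sign}(c_i)$). So the whole lemma reduces to proving:
\[
\text{all the }d_i\text{ coincide, and all the }c_i^{2}\text{ coincide.}
\]

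To prove this I would translate $a_{i,i}=1$ and $a_{i,i+1}=0$ into identities among the $d_i$ and $c_i$, using $A=(\det B)^{-1}\operatorname{adj}B$ and the standard expansion of minors of a cyclic-tridiagonal matrix. Write $P[a,b]$ for the determinant of the principal submatrix of $B$ on the consecutive arc of vertices $a,a+1,\dots,b$, so that $P[i+1,i-1]$ is the determinant of the path obtained from $C_n$ by deleting vertex $i$, and $P[i+2,i-1]$ the one obtained by deleting the adjacent pair $\{i,i+1\}$. Then $a_{i,i}=1$ reads $P[i+1,i-1]=\det B$ for every $i$, while $a_{i,i+1}=0$ reads $c_i\,P[i+2,i-1]=\pm\prod_{k\ne i}c_k$ for every $i$. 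Before extracting the claim one must handle the degenerate case: if some $c_i=0$, the wrap-around at the edge $\{i,i+1\}$ disappears, $B$ becomes the tridiagonal matrix of a path on all $n$ vertices, and $A$ is the inverse of a path matrix with unit diagonal and vanishing subdiagonal; a short induction on $n$ (splitting off a leaf and evaluating a single cofactor), or the chordality argument already used in the proof of Theorem~\ref{thm:chorML1}, forces $A=\Id$, which is $N_n^{+}$-invariant. So from now on all $c_i\ne0$.

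With all $c_i\ne 0$ the identity $a_{i,i+1}=0$ becomes $P[i+2,i-1]=\pm c_i^{-2}\prod_k c_k$. Plugging this into the three-term recursion $P[a,b]=d_aP[a+1,b]-c_a^2P[a+2,b]$ for block determinants and comparing with $P[i+1,i-1]=\det B$ as $i$ runs once around the cycle should collapse the system to $d_{i+1}=d_i$ and $c_{i+1}^2=c_i^2$ for all $i$. Equivalently, phrased through the second-order recurrence $c_{i-1}x_{i-1}+d_ix_i+c_ix_{i+1}=0$ satisfied off the diagonal by the columns of $A=B^{-1}$: the conditions $a_{i,i+1}=0$ force the monodromy of this recurrence, conjugated to start at successive sites, to change only by the scalars $c_{i+1}/c_i$, and the normalizations $a_{i,i}=1$ then pin $d_i$ and $c_i^2$ down to be independent of $i$. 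Combined with the first paragraph, this proves the lemma.

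The hard part is precisely this last step: squeezing "all $d_i$ equal and all $c_i^2$ equal" out of the two families of determinant identities (or out of the monodromy). Beyond the bookkeeping, one must separately dispose of the parabolic situations --- where the Floquet multiplier of the recurrence is $\pm1$, or some block determinant $P[a,b]$ vanishes --- and of the smallest cycles $n=3,4$, where the arcs wrap around or degenerate (the case $n=3$ being subsumed by chordality anyway). The other ingredients --- the reduction of the first paragraph and the path/chordal degenerate case --- are routine.
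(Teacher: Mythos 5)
There are two genuine gaps. First, your opening move ``pass to $B:=A^{-1}$'' is not available for every $A$ in the statement: $L_{C_n}^{-1}$ is defined as a Zariski \emph{closure}, so it contains singular matrices, and for even $n$ the intersection $L_{C_n}^{-1}\cap(\Id+L_{C_n}^{\perp})$ really does contain singular points --- the checkerboard matrices $D\mathcal{C}_nD$ of Lemma~\ref{singular}, which have rank two and are not the inverse of anything in $L_{C_n}$. Your proposal never mentions this case, and your entire parametrization by $(d_i,c_i)$ collapses there; handling it requires an independent argument (the paper spends Lemmas~\ref{blockform}--\ref{singular} on exactly this, using the structure of low-rank matrices in $L_{C_n}$). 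This is precisely why the paper's proof works with the entries of $A$ itself and the $3\times 3$ minor equations of \cite{conner2023sullivant} that cut out $L_{C_n}^{-1}$ as a variety: those equations hold at singular points too, so no such case split is needed.

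Second, and more importantly, the entire content of the lemma sits in the step you yourself label ``the hard part'' and leave as ``should collapse.'' The reduction in your first paragraph (conjugation by $\mathcal D_n^\pm$ rescales $c_i\mapsto\delta_i\delta_{i+1}c_i$, so it suffices to show all $d_i$ coincide and all $c_i^2$ coincide) is correct but easy; everything of substance is deferred. Extracting ``$d_i$ constant, $c_i^2$ constant'' from the $2n$ identities $P[i+1,i-1]=\det B$ and $c_i\,P[i+2,i-1]=\pm\prod_{k\ne i}c_k$ is a nontrivial elimination --- it is the analogue of the paper's chain of minor computations, which first forces $a_{i,i+2}\in\{0,\pm1\}$ and then propagates constancy along the diagonals of $A$ by induction on the offset --- and you also flag but do not resolve the degenerate subcases (some $c_i=0$, vanishing block determinants, parabolic monodromy, small $n$). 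As written, the proposal is a plausible plan with a different bookkeeping (in terms of $B$ rather than $A$), but not a proof.
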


\begin{proof}
Consider a matrix $A\in L_{C_n}^{-1}\cap (\Id+L_{C_n}^\perp)$. Clearly we have $a_{i,i}=1$ and $a_{i,i+1}=0$ for all $0\le i\le n-1$, where $a_{n-1,n}=a_{0,n-1}$. Since $A\in L_{C_n}^{-1}$, it satisfies all equations from $I_{C_n}$. We will use some of them to prove the result:

$$\delta(0,1,2)(0,2,3):=\det\begin{pmatrix}
1&a_{0,2}&a_{0,3}\\
0&0&a_{1,3}\\
a_{0,2}&1&0
\end{pmatrix}=a_{1,3}(a_{0,2}^2-1).$$

Thus, we have $a_{1,3}=0$ or $a_{0,2}=\pm1$. By cyclic shift we get $a_{i,i+2}=0$ or $a_{i-1,i+1}=\pm1$ for all $0\le i\le n-1$, where indices are taken modulo $n$.

Now there are two options that occur: either $a_{i,i+2}=0$ for all $0\le i\le n-1$ or there exists an index $i_0$ such that $a_{i_0,i_0+2}\neq 0$. However, in the second case, from the above equation, we get $a_{i_0-1,i_0+1}=\pm1\neq 0$ and, by repeating this procedure, we obtain that $a_{i,i+2}=\pm 1$ for all $0\le i\le n-1$, where we again consider the indices modulo $n$.

We first look at the case $a_{i,i+2}=0$ for all $0\le i\le n-1$. We will show that $a_{i,i+d}=0$ for all $0\le i,d\le n-1$. For $d=1,2$ it holds. Consider the following $3\times 3$ minor:

$$0=\delta(i-1,i,i+1)(i-1,i+1,i+d):=$$
$$=\det
\begin{pmatrix}
1&a_{i-1,i+1}&a_{i-1,i+d}\\
0&0&a_{i,i+d}\\
a_{i+1,i-1}&1&a_{i+1,i+d}
\end{pmatrix}=\det
\begin{pmatrix}
1&0&a_{i-1,i+d}\\
0&0&a_{i,i+d}\\
0&1&a_{i+1,i+d}
\end{pmatrix}=a_{i,i+d}.$$

Therefore, in this case, we must have $A=\Id$ which is both $N_n^+$ and $N_n^-$-invariant.

Now let us consider the case $a_{i,i+2}=\pm 1$ for all $0\le i\le n-1$. Note that, by acting with $D_n^i$, we change the signs of $a_{i-2,i}$ and $a_{i,i+2}$. Thus, we can go through the entries $a_{0,2},a_{1,3},\dots a_{n-3,n-1}$ and by acting, if needed, with $D_n^2,D_n^3,\dots,D_n^{n-1}$, we make them all equal to $-1$. Thus, we may assume $a_{0,2}=a_{1,3}=\dots=a_{n-3,n-1}=-1$.

Next, we consider $3\times 3$ minors:

$$0=\delta(1,2,4)(0,1,4)=\det\begin{pmatrix}
0&1&a_{1,4}\\
-1&0&-1\\
a_{0,4}&a_{1,4}&1
\end{pmatrix}=-a_{1,4}^2-a_{0,4}+1.$$

Thus, $a_{1,4}^2=1-a_{0,4}$. Analogously, from $\delta(0,2,3)(0,3,4)$ we get $a_{0,3}^2=1-a_{0,4}$. Thus, $a_{0,3}^2=a_{1,4}^2$. By induction and a cyclic shift of the last equation, we get $a_{i,i+3}^2=a_{0,3}^2$ for all $0\le i\le n-4$. Moreover, by a cyclic shift of the equation $a_{0,3}^2=1-a_{0,4}$, we obtain $a_{i,i+4}=1-a_{i+1,i+4}^2=1-a_{0,3}^2$ for all $0\le i\le n-5$.

Our goal will be to prove that $a_{i,i+3}=a_{0,3}$ for all $0\le i\le n-4$. For this, we consider another $3\times 3$ minor:

$$0=\delta(0,1,2)(0,3,4)=\det\begin{pmatrix}
1&a_{0,3}&a_{0,4}\\
0&-1&a_{1,4}\\
-1&0&-1
\end{pmatrix}=$$
$$=1-a_{0,3}a_{1,4}-a_{0,4}=1-a_{0,3}a_{1,4}-1+a_{0,3}^2=a_{0,3}(-a_{1,4}+a_{0,3}).$$

If $a_{0,3}=0$, then we have $a_{i,i+3}=a_{0,3}=0$ for all $i\le n-4$. Thus, let us assume $a_{0,3}\neq 0$. Then $a_{1,4}=a_{0,3}$. Analogously (by cyclic shift), we obtain that $a_{i+1,i+4}=a_{i,i+3}$ for all $0\le i \le n-5$, therefore, by induction, we have that $a_{i,i+3}=a_{0,3}$ for all $0\le i\le n-4$.

Next, we prove that the matrix $A$ has equal entries on all diagonals, i.e $a_{i,i+d}=a_{0,d}$ for all $i,d\ge 0, i+d \le n-1$. We will proceed by induction on $d$. For $d=1,2,3$ we have already shown it. Next, consider $d\ge 4$ and we assume that the statement is true for all $d'<d$. We compute the following minor:
$$0=\delta(i,i+1,i+2)(i,i+3,i+d)=
\det\begin{pmatrix}
1&a_{0,3}&a_{i,i+d}\\
0&-1&a_{0,d-1}\\
-1&0&a_{0,d-2}
\end{pmatrix}=-a_{0,d-2}-a_{0,3}a_{0,d-1}-a_{i,i+d}.
$$

Thus $a_{i,i+d}=-a_{0,d-2}-a_{0,3}a_{0,d-1}$. Since this is true for all $i$ and the right-hand side does not depend on $i$, we have that $a_{i,i+d}=a_{0,d}$ for all $0\le i\le n-d-1$.

Consider the entry $a_{0,n-2}=\pm 1$. First, assume that $a_{0,n-2}=-1$ and consider the matrix $A'=(N_n^+)A(N_n^+)^{-1}\in L_{C_n}^{-1}\cap (\Id+L_{C_n}^\perp)$. We have $a'_{0,2}=\dots=a'_{n-1,n-3}$, thus by the same arguments as in the case of matrix $A$, we can prove $a'_{i,i+d}=a'_{0,d}$ for all $0\le i\le n-d-1$. This means that $a_{0,n-d}=a_{0,d}$ for all $d\le n/2$ and the matrix $A$ is $(N_n^+)$-invariant.

The case $a_{0,n-2}=1$ is analogous, the only difference is that we consider the matrix $A'=(N_n^-)A(N_n^-)^{-1}$ instead and we get that the matrix $A$ is $(N_n^-)$-invariant.
\end{proof}

\begin{Remark}\label{noneed}
Note that, for odd $n$, there exists a matrix $D\in \mathcal D^\pm_{n}$ such that $DN_n^+D=-N_n^-$. The matrix $D$ has $-1$ on all even positions. That implies that, for $N_n^-$-invariant matrix $A$, we have
$$DAD=DN_n^-A(N_n^-)^{-1}D=-(N_n^+D)A(-D(N_n^+)^{-1})=N_n^+(DAD)(N_n^+)^{-1}.$$

Hence, the matrix $DAD$ is $N_n^+$-invariant. This means that, for odd $n$, a stronger version of Lemma~\ref{cyclic} holds, as we do not need the $N_n^-$-invariant assumption. However, that is not the case for even $n$.
\end{Remark}

Let us denote by $M_n(x),M_n^+(x),M_n^-(x)$ the following matrices:

$$M_n(x):=\begin{pmatrix}
1&x&0&0&\dots&0&0\\
x&1&x&0&\dots&0&0\\
0&x&1&x&\dots&0&0\\
0&0&x&1&\dots&0&0\\
\vdots&\vdots&\vdots&\vdots&\ddots&\vdots&\vdots\\
0&0&0&0&\dots&1&x\\
0&0&0&0&\dots&x&1
\end{pmatrix},$$

$$M_n^+(x):=\begin{pmatrix}
1&x&0&0&\dots&0&x\\
x&1&x&0&\dots&0&0\\
0&x&1&x&\dots&0&0\\
0&0&x&1&\dots&0&0\\
\vdots&\vdots&\vdots&\vdots&\ddots&\vdots&\vdots\\
0&0&0&0&\dots&1&x\\
x&0&0&0&\dots&x&1
\end{pmatrix},\ M_n^-(x):=\begin{pmatrix}
1&x&0&0&\dots&0&-x\\
x&1&x&0&\dots&0&0\\
0&x&1&x&\dots&0&0\\
0&0&x&1&\dots&0&0\\
\vdots&\vdots&\vdots&\vdots&\ddots&\vdots&\vdots\\
0&0&0&0&\dots&1&x\\
-x&0&0&0&\dots&x&1
\end{pmatrix}.$$

Moreover, let $P_n(x):=\det(M_n(x))$. We will prove several formulas concerning $P_n(x)$ that will be useful later.

\begin{Lemma}\label{det-recurrence}
It holds that $P_{m+n}(x)=P_m(x)P_n(x)-x^2P_{m-1}(x)P_{n-1}(x)$.
\end{Lemma}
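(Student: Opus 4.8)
The plan is to deduce the addition formula from the classical three-term recurrence for tridiagonal determinants, which itself comes from a single Laplace expansion. Throughout I use the conventions $P_0(x):=1$ (empty determinant) and $P_{-1}(x):=0$; note also $P_1(x)=1$ and $P_2(x)=1-x^2$. The first step is to establish
\[
P_k(x)=P_{k-1}(x)-x^2P_{k-2}(x)\qquad\text{for all }k\ge 1.
\]
For $k=1$ this is just the convention $P_{-1}=0$, and for $k\ge 2$ it follows by expanding $\det M_k(x)$ along its last row, whose only nonzero entries are a $1$ in column $k$ and an $x$ in column $k-1$. Deleting row $k$ and column $k$ leaves exactly $M_{k-1}(x)$, contributing $P_{k-1}(x)$; deleting row $k$ and column $k-1$ leaves a matrix whose last column (the old column $k$) has a single nonzero entry, an $x$ in its last row, so a secondary expansion along that column produces $x\det M_{k-2}(x)=xP_{k-2}(x)$. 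Multiplying by the primary cofactor sign $(-1)^{k+(k-1)}=-1$ and the secondary sign $(-1)^{(k-1)+(k-1)}=+1$ yields the term $-x^2P_{k-2}(x)$, and the recurrence follows.

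With the recurrence available, fix $m\ge 1$ and prove $P_{m+n}(x)=P_m(x)P_n(x)-x^2P_{m-1}(x)P_{n-1}(x)$ by induction on $n\ge 0$. The base cases $n=0$ and $n=1$ are immediate: $n=0$ reads $P_m=P_m\cdot 1-x^2P_{m-1}\cdot 0$, and $n=1$ is precisely the three-term recurrence $P_{m+1}=P_m-x^2P_{m-1}$. For $n\ge 2$, apply the three-term recurrence to the left-hand side, $P_{m+n}=P_{m+n-1}-x^2P_{m+n-2}$, substitute the inductive hypothesis at $(m,n-1)$ and $(m,n-2)$, group the resulting four terms according to whether they carry a factor $P_m$ or $P_{m-1}$, and then collapse $P_{n-1}-x^2P_{n-2}=P_n$ and $P_{n-2}-x^2P_{n-3}=P_{n-1}$ by the three-term recurrence once more. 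This gives $P_{m+n}=P_mP_n-x^2P_{m-1}P_{n-1}$ and closes the induction (equivalently, one may run a single induction on $m+n$).

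I expect no genuine obstacle: this is a standard continuant identity and every step is elementary linear algebra. The only places that need a little care are (i) getting the cofactor sign right in the derivation of the three-term recurrence, and (ii) fixing the conventions for $P_0$ and $P_{-1}$ so that the base cases $n=0,1$ of the induction are literal instances of the formula rather than separate hand computations. As an alternative to the induction one could expand $\det M_{m+n}(x)$ directly from its block form $\left(\begin{smallmatrix}M_m(x)&xE\\ xE^{T}&M_n(x)\end{smallmatrix}\right)$, where $E$ is the $m\times n$ matrix with a single nonzero entry in the corner adjacent to the splitting; but the recursive route is shorter and avoids the sign bookkeeping of a large Laplace expansion.
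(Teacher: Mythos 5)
Your proof is correct, but it runs in the opposite direction from the paper's. The paper proves the identity in one stroke by writing $M_{m+n}(x)$ in block form, with $M_m(x)$ and $M_n(x)$ on the diagonal and a single $x$ in each of the two corners adjacent to the splitting, and then sorting the terms of the determinant expansion according to whether they use those corner entries: the terms avoiding them contribute $P_m(x)P_n(x)$, and any term using one corner entry must use the other, contributing $-x^2P_{m-1}(x)P_{n-1}(x)$. You instead first extract the three-term recurrence $P_k=P_{k-1}-x^2P_{k-2}$ (which is just the case $m=1$ of the lemma, and which the paper only derives later, in the proof of Lemma~\ref{lemmas}(e), \emph{from} the general identity) and then obtain the full addition formula by induction on $n$. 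Your route is more elementary and every step is a routine Laplace expansion or a substitution, at the cost of being longer and requiring the conventions $P_0=1$, $P_{-1}=0$ to make the base cases and the $n=2$ step of the induction literal instances of the formula; the paper's route is shorter and conceptually cleaner but leans on a slightly informal ``classify the terms of the expansion'' argument. Your closing remark correctly identifies the paper's block-form expansion as the alternative you chose not to pursue. Both arguments are sound.
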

\begin{proof}
We write the matrix $M_{m+n}(x)$ as follows:

$$M_{m+n}(x)=\left(\begin{array}{c|c}
M_m(x)&\begin{matrix}
    0&\dots&0 \\
    &\ddots&\vdots\\
    \color{red}{x}& & 0
\end{matrix}\\
\hline
\begin{matrix}
    0& &\color{red}{x} \\
    \vdots&\ddots&\\
    0&\dots & 0
\end{matrix}&M_n(x)
\end{array}\right)$$

If we expand the determinant, there will be two types of terms. First, there are those that do not contain a red $x$. They will give us $\det(M_m(x))\cdot \det(M_n(x))$. If a term contains a red $x$, then it must also contain the other one, and those terms give us $-x^2\det(M_{m-1}(x))\cdot \det(M_{n-1}(x))$. Hence, $P_{m+n}(x)=P_m(x)P_n(x)-x^2P_{m-1}(x)P_{n-1}(x)$.
\end{proof}

\begin{Corollary}\label{recurrence}
    The following two identities hold for all $n>1$:
    \begin{itemize}
        \item[(a)]  $P_{2n}(x)=(P_n(x)-xP_{n-1}(x))(P_n(x)+xP_{n-1}(x))$
        \item[(b)] $P_{2n-1}(x)=P_{n-1}(x)(P_n(x)-x^2P_{n-2}(x))$.
    \end{itemize}
\end{Corollary}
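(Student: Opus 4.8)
The plan is to derive both identities directly from the recurrence in Lemma~\ref{det-recurrence} by making clever choices of $m$ and $n$, together with one extra relation coming from another specialization of the same recurrence. For part (a), I would simply set $m=n$ in Lemma~\ref{det-recurrence}, which immediately gives
\[
P_{2n}(x)=P_n(x)^2-x^2P_{n-1}(x)^2=(P_n(x)-xP_{n-1}(x))(P_n(x)+xP_{n-1}(x)),
\]
so (a) is essentially a one-line consequence once the factorization of a difference of squares is observed. No obstacle is expected here.

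For part (b), I would apply Lemma~\ref{det-recurrence} with $m=n$ and the second index $n-1$, obtaining
\[
P_{2n-1}(x)=P_n(x)P_{n-1}(x)-x^2P_{n-1}(x)P_{n-2}(x)=P_{n-1}(x)\bigl(P_n(x)-x^2P_{n-2}(x)\bigr),
\]
where in the first step I use the symmetry of the recurrence (equivalently apply it with the roles of the two blocks swapped, $m=n-1$ and the other index $n$, and note $P_{m+n}$ is unaffected). Thus the factor $P_{n-1}(x)$ is pulled out directly. Again this is a short computation rather than a deep argument.

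The only point that needs a little care—and the closest thing to an obstacle—is making sure the recurrence of Lemma~\ref{det-recurrence} is actually valid for the small index values that appear when $n$ is near the lower bound $n>1$: for $n=2$ part (b) involves $P_{n-2}(x)=P_0(x)$, and part (a) involves $P_{n-1}(x)=P_1(x)$. I would handle this by adopting the natural conventions $P_0(x)=1$ (empty determinant) and, if needed, $P_{-1}(x)=0$, and checking that Lemma~\ref{det-recurrence}'s block-expansion argument still produces the correct values in these boundary cases (indeed $P_2(x)=1-x^2=P_1(x)^2-x^2P_0(x)^2$ and $P_3(x)=1-2x^2=P_2(x)P_1(x)-x^2P_1(x)P_0(x)=P_1(x)(P_2(x)-x^2P_0(x))$, confirming both identities at $n=2$). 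With these conventions in place, the two displayed computations above constitute the full proof.
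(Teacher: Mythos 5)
Your proposal is correct and matches the paper's proof exactly: part (a) is Lemma~\ref{det-recurrence} with $m=n$ followed by the difference-of-squares factorization, and part (b) is the same lemma with $m=n-1$. Your extra check of the boundary conventions $P_0(x)=P_1(x)=1$ at $n=2$ is a harmless addition that the paper handles implicitly.
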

\begin{proof}
    For (a), by Lemma \ref{det-recurrence}, $$P_{2n}(x)=P_n(x)^2-x^2P_{n-1}(x)^2=(P_n(x)-xP_{n-1}(x))(P_n(x)+xP_{n-1}(x)).$$
    For (b), plug in $m=n-1$ in Lemma \ref{det-recurrence}.
\end{proof}
\begin{Lemma}\label{lemmas}
The following statements hold for all $n>1$:
\begin{itemize}
    \item[(a)] $x^{2n-2}=P_{n-1}(x)^2-P_n(x)P_{n-2}(x)$ 
    \item[(b)] $P_{2n-1}(x)-x^{2n-1}=(P_n(x)-xP_{n-1}(x))(P_{n-1}(x)+xP_{n-2}(x))$ 
    \item[(c)] $P_{2n}(x)+x^{2n}=P_n(x)(P_{n}(x)-x^2P_{n-2}(x))$
    \item[(d)] $P_{2n}(x)-x^{2n}=P_{n-1}(x)(P_{n+1}(x)-x^2P_{n-1}(x))$
    \item[(e)] Polynomials $P_n(x)$, $P_{n-1}(x)$ and $P_{n-2}(x)$ are mutually coprime for all $n>1$. 
\end{itemize}
\end{Lemma}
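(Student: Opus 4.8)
All five statements are polynomial identities in the one family $P_n(x)=\det M_n(x)$, so the plan is to isolate the substantive content in part~(a) and then grind the remaining four parts against the doubling formulas already in hand. First I would record the only ingredients needed beyond Lemma~\ref{det-recurrence} and Corollary~\ref{recurrence}: expanding $\det M_n(x)$ along the first row (equivalently taking $m=1$ in Lemma~\ref{det-recurrence}) gives the three-term recurrence $P_n(x)=P_{n-1}(x)-x^2P_{n-2}(x)$ with $P_0(x)=P_1(x)=1$, and $P_n(0)=\det I_n=1$ for every $n$.

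For (a), introduce $Q_n:=P_{n-1}^2-P_nP_{n-2}$ and compute $Q_{n+1}$: substituting $P_{n+1}=P_n-x^2P_{n-1}$ gives $Q_{n+1}=P_n^2-P_nP_{n-1}+x^2P_{n-1}^2=P_n(P_n-P_{n-1})+x^2P_{n-1}^2$, and then $P_n-P_{n-1}=-x^2P_{n-2}$ collapses this to $Q_{n+1}=x^2(P_{n-1}^2-P_nP_{n-2})=x^2Q_n$. Since $Q_2=P_1^2-P_2P_0=1-(1-x^2)=x^2$, induction yields $Q_n=x^{2n-2}$, which is exactly (a). (One could equally well induct on $n$ directly, or derive (a) from the closed form of $P_n$ in terms of the roots of $t^2-t+x^2$, but the telescoping computation is shortest.)

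Parts (b), (c), (d) are then one-line checks in which the two sides of the claimed identity differ by precisely the power of $x$ produced by (a). For (b) I would start from Corollary~\ref{recurrence}(b), $P_{2n-1}=P_{n-1}(P_n-x^2P_{n-2})$, expand the proposed right-hand side $(P_n-xP_{n-1})(P_{n-1}+xP_{n-2})$, and note that the difference of the two equals $x P_nP_{n-2}-xP_{n-1}^2=-x\,(P_{n-1}^2-P_nP_{n-2})=-x^{2n-1}$ by (a). For (c) and (d) I would instead start from $P_{2n}=P_n^2-x^2P_{n-1}^2$ (Lemma~\ref{det-recurrence} with $m=n$, i.e.\ Corollary~\ref{recurrence}(a)): expanding $P_n(P_n-x^2P_{n-2})$ and subtracting leaves $x^2(P_{n-1}^2-P_nP_{n-2})=x^{2n}$, which is (a); expanding $P_{n-1}(P_{n+1}-x^2P_{n-1})$ and subtracting leaves $P_n^2-P_{n+1}P_{n-1}=x^{2n}$, which is (a) with $n$ replaced by $n+1$. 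So (b)--(d) follow mechanically once (a) is established.

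For (e), any common polynomial factor $g$ of $P_n$ and $P_{n-1}$ divides $Q_n=P_{n-1}^2-P_nP_{n-2}=x^{2n-2}$ by (a), hence is a power of $x$ up to a scalar; but $P_n(0)=1\neq 0$, so $g$ is a nonzero constant. The same argument applied to a common factor of $P_{n-1}$ and $P_{n-2}$ (which also divides $P_{n-1}^2-P_nP_{n-2}$) handles that pair, and a common factor of $P_n$ and $P_{n-2}$ divides $P_{n-1}=P_n+x^2P_{n-2}$ as well, so it lies in $\gcd(P_{n-1},P_{n-2})$ and is again constant. I do not expect a real obstacle anywhere: the entire substance of the lemma sits in (a), and the rest is a chain of short algebraic verifications. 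The only points to watch are keeping the small-index conventions $P_0=P_1=1$ straight so that (a) and Corollary~\ref{recurrence}(b) are used within their stated range $n>1$, and remembering that the noncommon factor of $x$ in (e) is ruled out purely by $P_n(0)=1$.
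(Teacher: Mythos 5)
Your proposal is correct and follows essentially the same route as the paper: part (a) by induction on $n$ via the relation $P_{n-1}^2-P_nP_{n-2}=x^2(P_{n-2}^2-P_{n-1}P_{n-3})$, parts (b)--(d) as mechanical consequences of (a) together with Lemma~\ref{det-recurrence} and Corollary~\ref{recurrence}, and (e) from the three-term recurrence plus the fact that $x\nmid P_n$. The only cosmetic differences are that you obtain the inductive step for (a) from the recurrence $P_{n+1}=P_n-x^2P_{n-1}$ rather than from two expansions of $P_{2n-2}$, and for (e) you invoke (a) to force any common factor to divide $x^{2n-2}$ instead of propagating it down to $P_1=1$; both variants are valid.
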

\begin{proof}
(a) We proceed by induction on $n$. For $n=2$ it is trivial since $P_1(x)=P_0(x)=1$ and $P_2(x)=1-x^2$.
Let us assume that the statement holds for $n-1$. By Lemma \ref{det-recurrence},
$$P_{n-1}^2(x)-x^2P_{n-2}(x)^2=P_{2n-2}(x)=P_n(x)P_{n-2}(x)-x^2P_{n-1}(x)P_{n-3}(x).$$
By comparing the left and right-hand sides and using the induction hypothesis, we obtain:
$$P_{n-1}(x)^2-P_n(x)P_{n-2}(x)=x^2(P_{n-2}(x)^2-P_{n-1}(x)P_{n-3}(x))=x^2\cdot x^{2n-4}=x^{2n-2}.$$
(b) By Lemma~\ref{det-recurrence}, we have
$$P_{2n-1}(x)=P_n(x)P_{n-1}(x)-x^2P_{n-1}(x)P_{n-2}(x).$$
Thus it remains to prove that $$x^{2n-1}=x((P_{n-1}(x))^2-P_n(x)P_{n-2}(x)),$$
which holds by part (a). \\
(c) By Corollary~\ref{recurrence}(a) and part (a), we have that
$$P_{2n}(x)+x^{2n}=P_n(x)^2-x^2P_{n-1}(x)^2+x^2P_{n-1}(x)^2-x^2P_n(x)P_{n-2}(x),$$
which shows the first equality. The proof of (d) is analogous. \\
(e) Note that none of the polynomials $P_n(x)$ is divisible by $x$. Suppose that there is a non-constant polynomial $Q(x)$ such that $Q(x)$ divides two of polynomials $P_n(x)$, $P_{n-1}(x)$ and $P_{n-2}(x)$. By using Lemma~\ref{det-recurrence} for $m=1$, we obtain that:
 $$P_{n}(x)=P_{n-1}(x)-x^2P_{n-2}(x).$$
Thus, $Q(x)$ divides all three polynomials. Using this recurrence, one can easily see by induction that $Q(x)\mid P_m(x)$ for any positive integer $m$. However, $P_1(x)=1$ which is a contradiction.
\end{proof}

\begin{Lemma}\label{regular}
    Let $A\neq \Id$ be a regular $N_n^+$-invariant (resp. $N_n^-$-invariant) matrix from $L_{C_{n}}^{-1}$. Then  $A^{-1}\in \Id+L_{C_{n}}^\perp$ if and only if $A=M_n^+(x)$ and $x^{n-2}+(-1)^n P_{n-2}(x)=0$ (resp. $A=M_n^-(x)$ and $x^{n-2}-(-1)^n P_{n-2}(x)=0)$.
\end{Lemma}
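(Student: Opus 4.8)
The plan is to analyze what it means for an $N_n^+$-invariant (resp. $N_n^-$-invariant) regular matrix $A \in L_{C_n}^{-1}$ to have inverse lying in $\Id + L_{C_n}^\perp$. Recall that $A \in L_{C_n}^{-1}$ means $A^{-1} \in L_{C_n}$, so $A^{-1}$ is a symmetric matrix supported on the diagonal and on the edges of the cycle: $(A^{-1})_{i,i+1}$ and $(A^{-1})_{0,n-1}$ may be nonzero, all other off-diagonal entries vanish. The condition $A^{-1} \in \Id + L_{C_n}^\perp$ forces the diagonal entries of $A^{-1}$ to be $1$ and, since $L_{C_n}^\perp$ consists of symmetric matrices vanishing on the diagonal and on every edge of $C_n$, it forces $A^{-1}$ to agree with a matrix in $L_{C_n}$ precisely on the cycle edges — but a priori there is no constraint there. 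Wait: $\Id + L_{C_n}^\perp$ is the affine space of symmetric matrices whose diagonal is all ones and whose cycle-edge entries are zero. So $A^{-1} \in \Id + L_{C_n}^\perp$ says exactly that $A^{-1}$ has ones on the diagonal and zeros on the cycle edges; combined with $A^{-1} \in L_{C_n}$ (zeros off-diagonal except cycle edges), this means $A^{-1} = \Id$, contradicting $A \neq \Id$ — unless I have the roles reversed. The correct reading: $A^{-1} \in L_{C_n}$ automatically (that is what $A \in L_{C_n}^{-1}$ gives, up to closure), and the extra condition $A^{-1} \in \Id + L_{C_n}^\perp$ is a condition on $A$ itself, namely that $A - \Id \perp L_{C_n}$, i.e. $A_{i,i} = 1$ for all $i$ and $A_{i,i+1} = A_{0,n-1} = 0$. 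So I must first translate the hypotheses into: (i) $A$ has all diagonal entries $1$ and all cycle-edge entries $0$; (ii) $A^{-1}$ is supported only on the diagonal and cycle edges.

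First I would use the $N_n^\pm$-invariance together with constraint (i) and the computations of Lemma~\ref{cyclic} to pin down the form of $A$. By invariance, $A$ is a circulant-type matrix: $a_{i,i+d} = a_{0,d}$ (in the $N_n^+$ case) or the signed variant (in the $N_n^-$ case), with $a_{0,1} = 0$ and $a_{0,0} = 1$. Then $A^{-1}$ is again $N_n^\pm$-invariant; for such a matrix, being supported only on the diagonal and the two cycle edges means $(A^{-1})_{0,d} = 0$ for $2 \le d \le n-2$. So the real content is: among $N_n^\pm$-invariant symmetric matrices $A$ with unit diagonal and zero on the first off-diagonal, which ones have $A^{-1}$ supported on $\{d = 0, 1, n-1\}$? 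I claim the answer forces the remaining free entries to line up into exactly the tridiagonal-plus-corners pattern $M_n^\pm(x)$. To see this I would argue that $A^{-1}$ being tridiagonal-with-corners and $N_n^\pm$-invariant makes $A^{-1}$ itself of the form $M_n^\pm(y)$ for some scalar $y$ (its diagonal is $\pm 1$ after scaling — actually its diagonal entries are all equal by invariance, and we can normalize; its $(0,1)$ entry is some $y$, its corner entry is $\pm y$). Then $A = (A^{-1})^{-1}$, and imposing that $A$ has unit diagonal and vanishing first off-diagonal will give one scalar equation relating $y$ to the normalization, which I expect to collapse to the stated condition after using the determinant identities.

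The cleanest route for the final numeric condition is a direct determinant computation. Taking $A = M_n^+(x)^{-1}$ (so that $A \in L_{C_n}^{-1}$ with $A^{-1} = M_n^+(x) \in L_{C_n}$ manifestly), the requirement $A \in \Id + L_{C_n}^\perp$ becomes: the diagonal entries of $M_n^+(x)^{-1}$ are all $1$, and its first-off-diagonal and corner entries vanish. By Cramer's rule these entries are cofactors of $M_n^+(x)$ divided by $\det M_n^+(x)$. Using a Laplace-type expansion of $M_n^+(x)$ along the corner entries — analogous to the expansion in Lemma~\ref{det-recurrence} — one writes $\det M_n^+(x)$ and the relevant cofactors in terms of the path determinants $P_k(x)$. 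I expect the vanishing of the first-off-diagonal cofactor to be automatic by symmetry/invariance once the corner cofactor vanishes (as in the odd-$n$ reduction of Remark~\ref{noneed} and the argument in Theorem~\ref{thm:chorML1}), and the vanishing of the corner cofactor together with the normalization of the diagonal entry to both reduce, via Corollary~\ref{recurrence} and Lemma~\ref{lemmas}, to the single equation $x^{n-2} + (-1)^n P_{n-2}(x) = 0$. The $N_n^-$ case is identical with signs tracked through, giving $x^{n-2} - (-1)^n P_{n-2}(x) = 0$; alternatively, for odd $n$ one deduces it from the $N_n^+$ case via the conjugation $D N_n^+ D = -N_n^-$ of Remark~\ref{noneed}, and for even $n$ one repeats the computation.

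The main obstacle I anticipate is the bookkeeping in the cofactor expansion of $M_n^\pm(x)$: the corner entries appear in positions $(0,n-1)$ and $(n-1,0)$, so extracting the relevant minors requires carefully identifying which sub-paths of the cycle survive, and keeping track of the signs $(-1)^{\,\cdot}$ from the Laplace expansion — this is where the factor $(-1)^n$ enters. A secondary subtlety is arguing that there are no further $N_n^\pm$-invariant solutions outside the one-parameter families $M_n^\pm(x)$; this is really the statement that an $N_n^\pm$-invariant matrix whose inverse is tridiagonal-with-corners is forced into this family, which I would justify by noting that an $N_n^\pm$-invariant symmetric matrix supported on $\{d=0,1,n-1\}$ has exactly the shape $c\cdot M_n^\pm(x)$, and then the hypothesis $A \ne \Id$ rules out the degenerate $x = 0$ case. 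Once these structural points are in place, the identities already proved in Lemma~\ref{lemmas} do all the algebraic work.
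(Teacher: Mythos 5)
Your proposal is correct and follows essentially the same route as the paper: use the $N_n^\pm$-invariance to reduce to checking only the corner entry and one diagonal entry of the inverse, then evaluate these cofactors by Laplace expansion in terms of the path determinants, the paper computing $\det(A(\{0\},\{n-1\}))=x^{n-1}+(-1)^n xP_{n-2}(x)$ directly (the identities of Lemma~\ref{lemmas} are only needed in the next lemma). You also correctly diagnose that the hypotheses are inconsistent as literally written; the paper's intended reading (cf.\ Lemma~\ref{regular-odd/even}) is $A\in L_{C_n}$ with conclusion $A=M_n^\pm(x)$, which is the mirror image of your fix ($A$ the intersection point, conclusion $A=M_n^\pm(x)^{-1}$), and the two are equivalent under $A\leftrightarrow A^{-1}$.
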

\begin{proof}
Since the inverse matrix is given by $(n-1)\times(n-1)$ minors, we have $A^{-1}\in \Id+L_{C_{n}}^\perp$ if and only if $\det(A(\{0\},\{n-1\}))=0$ and $\det(A(\{0\},\{0\}))=\det(A)$. Note that it is sufficient to check only these two entries, since the matrix $A^{-1}$ is also $N_n^+$-invariant or $N_n^-$-invariant. By Laplace expansion, we get
\begin{equation*}\det(A)=a_{0,0}\det(A(\{0\},\{0\}))-a_{0,1}\det(A(\{0\},\{1\}))+(-1)^{n-1}a_{0,n-1}\det(A(\{0\},\{n-1\})).
\end{equation*}

 Thus, assuming that $\det(A(\{0\},\{n-1\}))=0$, the condition $\det(A(\{0\},\{0\}))=\det(A)$ holds if and only if $a_{0,0}=1$. Therefore, we may assume $A=M_n^+(x)$, (resp. $A=M_n^-(x)$).

We compute the determinant $\det(A(\{0\},\{n-1\}))$ by expansion by the first column. We do only the case of $N_n^+$-invariant matrix, the other case is analogous.
$$\det(A(\{0\},\{n-1\}))=x\det(A(\{0,1\},\{0,n-1\}))+(-1)^{n}x\det(A(\{0,n-1\},\{0,n-1\}))=$$
$$=x^{n-1}+(-1)^n xP_{n-2}(x).$$
Since $A\neq\Id$ we have $x\neq 0$. Thus,  $A^{-1}\in \Id+L_{C_{n}}^\perp$ if and only if $x^{n-2}+(-1)^n P_{n-2}(x)=0$.

In the case of $N_n^-$-invariant matrix, we obtain by similar computation that the condition is $x^{n-2}-(-1)^n P_{n-2}(x)=0$.
\end{proof}

\begin{Lemma}\label{regular-odd/even}
 The characterization of the $N_n^+$-invariant (or $N_n^-$-invariant) regular matrices $A \neq \Id$ in $L_{C_{n}}$ such that $A^{-1}\in \Id+L_{C_{n}}^\perp$  is the following one:

\begin{itemize}
    \item For $n=2m+1$ and $N_n^+$-invariant matrix: $A=M_n^+(x)$, $P_{m-1}(x)+xP_{m-2}(x)=0.$
    \item For $n=2m$ and $N_n^+$-invariant matrix: $A=M_n^+(x)$, $P_{m-1}(x)-x^2P_{m-3}(x)=0.$
    \item For $n=2m$ and $N_n^-$-invariant matrix: $A=M_n^-(x)$, $P_{m-2}(x)=0.$
\end{itemize}
\end{Lemma}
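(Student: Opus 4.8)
The plan is to read the statement off Lemma~\ref{regular} by factoring the two polynomials occurring there. Write $h_n^+(x):=x^{n-2}+(-1)^nP_{n-2}(x)$ and $h_n^-(x):=x^{n-2}-(-1)^nP_{n-2}(x)$. By Lemma~\ref{regular}, the regular $N_n^+$-invariant (resp.\ $N_n^-$-invariant) matrices $A\neq\Id$ with $A^{-1}\in\Id+L_{C_n}^\perp$ are precisely the matrices $M_n^+(x)$ (resp.\ $M_n^-(x)$) with $h_n^+(x)=0$ (resp.\ $h_n^-(x)=0$), where necessarily $x\neq0$ since $A\neq\Id$; what remains is to identify, among these, which parameters $x$ give a genuinely \emph{regular} matrix. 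For this I would first establish the determinant identity
$$\det M_n^+(x)=P_{n-1}(x)-2(-1)^n x^2 h_n^+(x),\qquad \det M_n^-(x)=P_{n-1}(x)+2(-1)^n x^2 h_n^-(x),$$
by a Laplace expansion of $\det M_n^\pm(x)$ along the first row: the first minor is the tridiagonal determinant $P_{n-1}(x)$, the third minor equals $x\,h_n^\pm(x)$ (this computation already appears in the proof of Lemma~\ref{regular}), and the second minor equals $(-1)^n$ times the third, since the two of them compute the two equal ``distance one'' entries of the $N_n^\pm$-invariant matrix $M_n^\pm(x)^{-1}$ (the corner sign of $M_n^-$ only flips one intermediate sign). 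Consequently, on the locus $h_n^\pm(x)=0$ we have $\det M_n^\pm(x)=P_{n-1}(x)$, so $M_n^\pm(x)$ is regular there if and only if $P_{n-1}(x)\neq0$.

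Next I would split by the parity of $n$ and factor $h_n^\pm$, matching the factorization against that of $P_{n-1}$ supplied by Corollary~\ref{recurrence}. For $n=2m+1$: Lemma~\ref{lemmas}(b) gives $h_n^+(x)=-\bigl(P_{2m-1}(x)-x^{2m-1}\bigr)=-\bigl(P_m(x)-xP_{m-1}(x)\bigr)\bigl(P_{m-1}(x)+xP_{m-2}(x)\bigr)$, while Corollary~\ref{recurrence}(a) gives $P_{n-1}(x)=P_{2m}(x)=\bigl(P_m(x)-xP_{m-1}(x)\bigr)\bigl(P_m(x)+xP_{m-1}(x)\bigr)$. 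For $n=2m$, the $N_n^+$ case: Lemma~\ref{lemmas}(c) gives $h_n^+(x)=P_{2m-2}(x)+x^{2m-2}=P_{m-1}(x)\bigl(P_{m-1}(x)-x^2P_{m-3}(x)\bigr)$, and Corollary~\ref{recurrence}(b) gives $P_{n-1}(x)=P_{2m-1}(x)=P_{m-1}(x)\bigl(P_m(x)-x^2P_{m-2}(x)\bigr)$. For $n=2m$, the $N_n^-$ case: Lemma~\ref{lemmas}(d) gives $h_n^-(x)=-\bigl(P_{2m-2}(x)-x^{2m-2}\bigr)=-P_{m-2}(x)\bigl(P_m(x)-x^2P_{m-2}(x)\bigr)$, and again $P_{n-1}(x)=P_{2m-1}(x)=P_{m-1}(x)\bigl(P_m(x)-x^2P_{m-2}(x)\bigr)$. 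In every case we can write $h_n^\pm=g\cdot e$, where $g$ is the polynomial asserted in the statement ($P_{m-1}+xP_{m-2}$, resp.\ $P_{m-1}-x^2P_{m-3}$, resp.\ $P_{m-2}$) and $e$ (namely $P_m-xP_{m-1}$, resp.\ $P_{m-1}$, resp.\ $P_m-x^2P_{m-2}$) is also a factor of $P_{n-1}$. Thus every root of $e$ is a root of $P_{n-1}$, hence gives a singular $M_n^\pm(x)$ and is to be discarded.

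It remains to show that the roots of $g$ are exactly the admissible parameters. Since $P_k(0)=1$ for all $k$, we have $g(0)=1$, so a root $x$ of $g$ is nonzero and $M_n^\pm(x)\neq\Id$. To see that such an $x$ is not a root of $P_{n-1}$, write $P_{n-1}=e\cdot f$ with $f$ the remaining factor displayed above; it then suffices to show that $g$ has no common root with $e$ nor with $f$. In the case $n=2m$, $N_n^-$ this is immediate from the mutual coprimality of $P_m,P_{m-1},P_{m-2}$ (Lemma~\ref{lemmas}(e)), as there $g=P_{m-2}$, $e=P_m-x^2P_{m-2}$ and $f=P_{m-1}$. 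In the two $N_n^+$ cases, substituting the recurrence $P_k=P_{k-1}-x^2P_{k-2}$ into $g(x_0)=0$ together with $e(x_0)=0$ (resp.\ $f(x_0)=0$) and eliminating shows that a common root $x_0\neq0$ must satisfy $P_{m-1}(x_0)=0$ or $x_0=-\tfrac12$ (for $n$ odd), resp.\ $P_{m-3}(x_0)=0$ or $x_0^2=\tfrac14$ (for $n$ even); the vanishing alternative forces $g$ to share a root with an adjacent $P_k$, contradicting Lemma~\ref{lemmas}(e), while $x_0^2=\tfrac14$ is impossible because each $P_k$ is a polynomial in $x^2$ whose recurrence at $x=\pm\tfrac12$ has the double root $\tfrac12$, giving $P_k(\pm\tfrac12)=(k+1)2^{-k}$ and hence $g(\pm\tfrac12)\neq0$. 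Therefore, for a root $x$ of $g$ the matrix $M_n^\pm(x)$ is regular, different from $\Id$, and has inverse in $\Id+L_{C_n}^\perp$; conversely any regular such matrix $\neq\Id$ equals $M_n^\pm(x)$ with $h_n^\pm(x)=0$ and $P_{n-1}(x)\neq0$, hence with $e(x)\neq0$, whence $g(x)=0$. The main obstacle is the determinant identity of the first paragraph and the parity-dependent sign bookkeeping in the second; the coprimality and $x=\pm\tfrac12$ verifications are then routine.
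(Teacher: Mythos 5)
Your proposal is correct, and its skeleton coincides with the paper's proof: reduce to Lemma~\ref{regular}, observe that $\det M_n^\pm(x)=P_{n-1}(x)$ on the locus where the corner minor vanishes (the paper gets this directly from the Laplace expansion, since the two off-diagonal minors are exactly the quantities forced to vanish by $A^{-1}\in\Id+L_{C_n}^\perp$, so your global identity $\det M_n^\pm(x)=P_{n-1}(x)\mp 2(-1)^nx^2h_n^\pm(x)$ is more than is needed), factor $h_n^\pm$ via Lemma~\ref{lemmas}(b),(c),(d), and discard the factor that also divides $P_{n-1}(x)$ by Corollary~\ref{recurrence}. The one place you genuinely diverge is the verification that the roots of the surviving factor $g$ are not roots of $P_{n-1}$: you run an elimination through the three-term recurrence, which produces the exceptional values $x_0=\pm\tfrac12$ and forces you to evaluate $P_k(\pm\tfrac12)=(k+1)2^{-k}$ to rule them out. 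The paper avoids all of this by noting that $g$ divides $P_{2m-2}$ (odd case) or $P_{2m-3}$ (even cases) via Corollary~\ref{recurrence}, and that this polynomial is coprime to $P_{n-1}$ by Lemma~\ref{lemmas}(e); that one-line divisibility observation replaces your case analysis entirely. Your route works (the computations you sketch do close up, and the loose ends --- e.g.\ that in the $n=2m$, $N_n^+$ case the common-root analysis against $e=P_{m-1}$ yields $P_{m-3}(x_0)=0$ and hence a violation of the coprimality of consecutive $P_k$'s via the recurrence --- are routine), but if you write it up you should either carry out the eliminations explicitly for both $e$ and $f$ in each parity case or simply adopt the divisibility shortcut.
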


\begin{proof}
By Lemma \ref{regular}, in all cases we have $A=M_n^+(x)$ (or $A=M_n^-(x)$), and a polynomial condition for $x$. However, we must also check that $A$ is regular, i.e. $\det(A)\neq 0$. We have
$$\det(A)=a_{0,0}\det(A(\{0\},\{0\}))-a_{0,1}\det(A(\{0\},\{1\}))+(-1)^{n-1}a_{0,n-1}\det(A(\{0\},\{n-1\}))=$$
$$=\det(A(\{0\},\{0\}))=P_{n-1}(x).$$

Thus, in all cases, we want $x$ not to be a root of $P_{n-1}(x)$. We consider the cases separately:

\begin{description}
\item[For $n=2m+1$ and $N_n^+$-invariant matrix]
We apply Lemma~\ref{lemmas}(b), to get $$0=x^{n-2}-P_{n-2}(x)=(P_m(x)-xP_{m-1}(x))(P_{m-1}(x)+xP_{m-2}(x)).$$ However, $(P_m(x)-xP_{m-1}(x))\mid P_{n-1}(x)$, which shows that $x$ cannot be a root of this polynomial.
In consequence, we must have $P_{m-1}(x)+xP_{m-2}(x)=0$.

We claim that for every root of this polynomial, the matrix $A=M_n^+(x)$ is regular. To prove that, we will show that polynomials $P_{m-1}(x)+xP_{m-2}(x)$ and $P_{n-1}(x)$ are coprime. Indeed, by Corollary~\ref{recurrence}(a), we have that $P_{m-1}(x)+xP_{m-2}(x)\mid P_{2m-2}(x)$. By Lemma~\ref{lemmas}(e), the polynomials $P_{n-1}$ and $P_{2m-2}$ are coprime, which implies that also polynomials $P_{m-1}(x)+xP_{m-2}(x)$ and $P_{n-1}(x)$ are coprime.
\item[For $n=2m$ and $N_n^+$-invariant matrix]
We apply Lemma~\ref{lemmas}(c) to obtain
$$P_{n-2}(x)+x^{n-2}=P_{m-1}(x)(P_{m-1}(x)-x^2P_{m-3}(x)).$$
However, by Corollary \ref{recurrence}(b) $P_{m-1}(x)\mid P_{2m-1}(x)$, thus we must have $P_{m-1}(x)-x^2P_{m-3}(x)=0$. Since this polynomial is a divisor of $P_{2m-3}(x)$ which is, by Lemma \ref{lemmas}(e), coprime with $P_{2m-1}(x)$, every root of this polynomial will result in a regular matrix.
\item[For $n=2m$ and $N_n^-$-invariant matrix]
Analogously, we apply Lemma~\ref{lemmas}(d) to obtain
$$P_{n-2}(x)-x^{n-2}=P_{m-2}(x)(P_{m}(x)-x^2P_{m-2}(x)).$$
This time, we have that $(P_{m}(x)-x^2P_{m-2}(x))\mid P_{2m-1}(x)$, again by Corollary \ref{recurrence}(b). This implies $P_{m-2}(x)=0$. This polynomial is a divisor of $P_{2m-3}(x)$, thus it is coprime with $P_{2m-1}(x)$. \qedhere
\end{description}
\end{proof}

\subsection{Singular matrices}
In this subsection, we characterize the points in $L_{C_{n}}^{-1}\cap(\Id+L_{C_{n}}^{\perp})$, up to the action of the group $\mathcal{D}^{\pm}_{n}$. For this, we need to understand more about the singular matrices in the intersection. The following result is a consequence of the structure of low-rank matrices in $L_{C_n}$ from \cite[Lemmas 3.1, 3.2]{DMV}.
\begin{Lemma}\label{blockform}
Let $B\in L_{C_n}$ be a singular matrix of rank at most $n-3$. Then up to a cyclic shift of rows and columns, the matrix $B$ is in the following form:

$$B=\begin{pmatrix}
    B_1&\dots&0\\
    \vdots&\ddots&\vdots\\
    0&\dots&B_k
\end{pmatrix}$$
Moreover, the matrices $B_i$ are tridiagonal and are either of rank $|B_i|$ or $|B_i|-1$, where $|B_i|$ denotes the size of a matrix.
\end{Lemma}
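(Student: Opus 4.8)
The plan is to reduce to the cited structure theory for low-rank matrices in $L_{C_n}$ and then argue that the constraint ``rank at most $n-3$'' forces the matrix to decompose into a block-diagonal form along the cycle. I would start from \cite[Lemmas 3.1, 3.2]{DMV}, which describe the combinatorial support of matrices of a given corank in $L_{C_n}$: a matrix in $L_{C_n}$ is tridiagonal in the cyclic sense (nonzero entries only on the diagonal, the two off-diagonals, and the two corner entries $a_{0,n-1}$, $a_{n-1,0}$), and the cited lemmas say that when the rank drops by at least $3$ the corner entries must vanish and moreover the ``path'' of tridiagonal entries breaks into segments. Concretely, I would invoke that result to conclude that $B$ has at least one zero entry on the cyclic off-diagonal, say $b_{j,j+1}=0$; after a cyclic shift we may assume $b_{n-1,0}=b_{0,n-1}=0$, so $B$ is genuinely tridiagonal (not cyclic). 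Then I would locate all further zeros $b_{i_1,i_1+1}=\dots=b_{i_{k-1},i_{k-1}+1}=0$ on the off-diagonal; cutting at these positions exhibits $B$ in block-diagonal form with tridiagonal blocks $B_1,\dots,B_k$, which is the asserted shape.

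The second claim — that each block $B_i$ has rank either $|B_i|$ or $|B_i|-1$ — I would derive from the fact that a tridiagonal matrix $B_i$ with all off-diagonal entries nonzero is a \emph{nonderogatory} matrix (its characteristic polynomial equals its minimal polynomial), equivalently all its $(|B_i|-1)\times(|B_i|-1)$ leading-principal-type minors obtained by deleting the first row and last column (or last row and first column) are $\pm$ products of off-diagonal entries and hence nonzero. Therefore $B_i$ has corank at most $1$: if it were singular its nullity would be exactly $1$. This is where one uses that by construction each $B_i$ is a \emph{maximal} tridiagonal segment, so none of its internal off-diagonal entries vanishes. Combined with the previous paragraph, the total corank of $B$ is the number of singular blocks, and since the blocks are the indecomposable pieces this recovers the structure theorem's bookkeeping on ranks.

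The main obstacle I anticipate is making precise the passage from ``\cite[Lemmas 3.1, 3.2]{DMV}'' to exactly the statement needed here: those lemmas presumably classify low-rank matrices up to scaling and symmetry, and one must check that the block decomposition is honestly block-diagonal (no hidden coupling through the cyclic corners once a single off-diagonal entry is zero) and that the hypothesis ``rank $\le n-3$'' — rather than merely ``singular'' — is what guarantees the corner entries die and at least one interior off-diagonal entry dies, so that the decomposition is nontrivial. A secondary technical point is handling the possibility that a block $B_i$ has size $1$ (a single diagonal entry), where ``tridiagonal'' and the rank dichotomy are vacuous or trivial; I would dispose of this as a degenerate case. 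Once the translation to \cite{DMV} is pinned down, the rest is the elementary linear algebra of tridiagonal matrices sketched above.
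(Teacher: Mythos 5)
Your proposal is correct, and it is in fact more of a proof than the paper supplies: the paper states this lemma with only the citation to \cite[Lemmas 3.1, 3.2]{DMV} and gives no argument of its own. Your elementary route actually makes the citation dispensable. The cyclic-tridiagonal support of $B$ is not a theorem from \cite{DMV} but the definition of $L_{C_n}$ (nonzero off-diagonal entries only at the edges $\{i,i+1\}$ and $\{0,n-1\}$ of the cycle), and the one step you delegate to the reference --- that rank at most $n-3$ forces a zero somewhere on the cyclic off-diagonal --- follows from the very minor computation you use for the blocks: if all entries $b_{i,i+1}$ (indices mod $n$) were nonzero, the submatrix on rows $0,\dots,n-3$ and columns $1,\dots,n-2$ would avoid the corner entries and be triangular with determinant $\prod_{i=0}^{n-3}b_{i,i+1}\neq 0$, giving $\rank B\geq n-2$. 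Your two anticipated obstacles are both harmless: conjugating by a cyclic permutation preserves $L_{C_n}$ and moves any chosen zero off-diagonal position into the corner, after which the matrix is symmetric and genuinely tridiagonal, so cutting at the remaining zero off-diagonal entries yields an honest block decomposition with no residual coupling; and a $1\times 1$ block trivially satisfies the rank dichotomy. The nonvanishing-minor (equivalently, nonderogatority) argument for $\rank B_i\geq |B_i|-1$ on each maximal segment is exactly right, and summing these bounds recovers that the hypothesis forces at least three blocks, consistent with the way the lemma is used later.
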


\begin{Lemma}\label{rank2}
Let $A\in L_{C_{n}}^{-1}\cap(\Id+L_{C_{n}}^{\perp}) $ be a singular matrix. Then the rank of $A$ is at most two.
\end{Lemma}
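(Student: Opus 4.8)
The plan is to argue by contradiction: assume $r:=\rank A\ge 3$ and reach a contradiction by feeding a suitable matrix into Lemma~\ref{blockform}. The bridge to that lemma is the observation that a singular point of $L_{C_n}^{-1}$ must be annihilated by a nonzero matrix of $L_{C_n}$. Concretely, since $A\in L_{C_n}^{-1}$ there are nonsingular $B_t\in L_{C_n}$ with $B_t^{-1}\to A$; because $\det(B_t)^{-1}=\det(B_t^{-1})\to\det A=0$ we get $\|B_t\|\to\infty$, so after rescaling and passing to a subsequence $B_t/\|B_t\|\to B$ for some $B\in L_{C_n}$ with $\|B\|=1$, and then $BA=\lim_t \tfrac{1}{\|B_t\|}\,\Id=0$. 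Hence $\im A\subseteq\ker B$ and $\rank B\le n-r\le n-3$.

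Next I would apply Lemma~\ref{blockform} to $B$. After one cyclic shift of the coordinate set $[n]$ — a rotation, which is an automorphism of $C_n$ and hence preserves $L_{C_n}$, $L_{C_n}^{\perp}$ and the two properties $BA=0$ and $A\in\Id+L_{C_n}^{\perp}$ — the matrix $B$ becomes block diagonal with tridiagonal blocks $B_1,\dots,B_k$ sitting on consecutive coordinate intervals, each of rank $|B_i|$ or $|B_i|-1$. Since $\operatorname{corank}B=n-\rank B\ge 3$, at least three of the blocks are rank-deficient, and $\ker B=\bigoplus_i\ker B_i$ is the span of the one-dimensional kernels of those deficient blocks, which are supported on pairwise disjoint intervals.

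The core of the argument is to rule out deficient blocks of size $\ge 2$. Fix a deficient block $B_{i_0}$, with kernel spanned by a vector $v$ supported inside the interval of $B_{i_0}$. Because $\im A\subseteq\ker B$ and the kernels of distinct blocks have disjoint supports, restricting any column of $A$ to the rows of $B_{i_0}$ yields a multiple of $v$; symmetry of $A$ then forces the principal submatrix of $A$ on that interval to equal $c\,v v^{T}$ for a scalar $c$ (here $v$ is restricted to the interval). Its diagonal is the all-ones vector since $A\in\Id+L_{C_n}^{\perp}$, so $c\ne 0$ and $v$ has all entries nonzero; but then for two cycle-adjacent indices $j,j+1$ inside the interval we would obtain $0=a_{j,j+1}=c\,v_jv_{j+1}\ne 0$, again using $A\in\Id+L_{C_n}^{\perp}$. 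Hence every deficient block is a single vertex, so $\ker B=\langle e_{j_1},\dots,e_{j_m}\rangle$ for distinct vertices $j_1,\dots,j_m$ with $m=\operatorname{corank}B\le n-3<n$. Finally $\im A\subseteq\langle e_{j_1},\dots,e_{j_m}\rangle$ says every row of $A$ indexed outside $\{j_1,\dots,j_m\}$ vanishes, contradicting $a_{jj}=1$ for such $j$. Therefore $\rank A\le 2$.

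The one delicate point is the first step, producing $B\in L_{C_n}\setminus\{0\}$ with $BA=0$: this is a compactness/limit argument that should be spelled out carefully (equivalently, one can apply the curve selection lemma to the constructible set of inverses and take the leading Laurent coefficient of the resulting arc $B(\tau)$ in $L_{C_n}$). Everything after that is linear-algebra bookkeeping on top of Lemma~\ref{blockform}, together with the two constraints $a_{ii}=1$ and $a_{i,i+1}=0$ coming from $A\in\Id+L_{C_n}^{\perp}$.
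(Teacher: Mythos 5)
Your proposal is correct and follows the same overall strategy as the paper's proof: both hinge on producing a nonzero $B\in L_{C_n}$ with $BA=0$ and then feeding $B$ into Lemma~\ref{blockform}. The differences lie at the two ends. For the existence of $B$, the paper works with the closure $\Gamma$ of the graph of the inversion map inside $\PP(L_{C_n})\times\PP(S^2(\CC^n))$: the equations $(BA)_{i,j}=0$ for $i\neq j$ and $(BA)_{i,i}=(BA)_{j,j}$ hold on $\Gamma$, so any $B$ with $([B],[A])\in\Gamma$ satisfies $BA=t\Id$, and $t=0$ once $A$ is singular; your rescaled-limit argument is a Euclidean-topology version of the same idea and is valid over $\CC$ (the Zariski and Euclidean closures of the constructible set of inverses agree), though, as you note yourself, it needs to be written out carefully. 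For the endgame, the paper is more economical: it picks any block $B_1$ of positive rank and size $k$, observes that $BA=0$ forces the $k$ rows of $A$ in that interval to have rank at most $\dim\ker B_1\le 1$, and this already contradicts the $2\times 2$ identity submatrix of $A$ on two consecutive indices coming from $a_{i,i}=1$ and $a_{i,i+1}=0$. Your route instead classifies $\ker B$, showing that a corank-one block of size at least $2$ would force the corresponding principal submatrix of $A$ to equal $cvv^{T}$ with all $v_j\neq 0$, contradicting $a_{j,j+1}=0$; this is correct but longer. One small slip: you write $m=\operatorname{corank}B\le n-3$, whereas $\rank B\le n-3$ gives $\operatorname{corank}B\ge 3$; what your final step actually requires is only $m<n$, and that holds because $B\neq 0$ cannot consist exclusively of $1\times 1$ zero blocks.
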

\begin{proof}

Let us work in the projective space and consider the closure of the graph of the inverse map, i.e:

$$\Gamma:=\overline{\{([B],[B^{-1}]): B\in L_{C_n},\ \det(B)\neq 0)\}}\subset \PP(L_{C_n})\times \PP(S^2(\CC^n)).$$

\begin{equation*}
\begin{tikzcd}
& \Gamma\subset\PP(L_{C_n})\times \PP(S^2(\CC^n)) \arrow[ld,"\pi_1"'] \arrow[rd,"\pi_2"] &  \\
\PP(L_{C_n}) \arrow[rr,""',dashed] & & \PP(\CC^n)
\end{tikzcd}
\end{equation*}

For any pair of regular matrices $([B],[A])\in \Gamma$, we have the equations $(BA)_{i,j}=0$ for $i\neq j$ and $(BA)_{i,i}-(BA)_{j,j}=0$. Therefore, these equations belong to the ideal $I(\Gamma)$.

Since the image of the projection $\pi_2$ restricted to $\Gamma$ is $\PP(L_{C_n}^{-1})$, for any nonzero $A\in L_{C_n}^{-1}$ there exists a nonzero matrix $B\in L_{C_n}$ such that $([A],[B])\in\Gamma$. Thus, $BA=t\Id$ for some $t$. In the case where matrix $A$ is singular, we must have $BA=0$.

Assume for contradiction that we have a singular matrix $A\in L_{C_{n}}^{-1}\cap(\Id+L_{C_{n}}^{\perp})$ such that the rank of $A$ is at least three. Then there exists a nonzero matrix $B\in L_{C_n}$ of rank at most $n-3$ such that $BA=0$. By Lemma~\ref{blockform}, the matrix $B$ is in block form (up to cyclic shift of rows and columns). Clearly, there exists a block of a positive rank, as $B\neq 0$. Without loss of generality, it is the first block $B_1$ and it has size $k$. If $\rank(B_1)=k$, then the first $k$ rows of $A$ must be equal to 0 which is not possible. Thus, $\rank(B_1)=k-1$ and $k\ge 2$. However, in this case, the matrix formed by the first $k$ rows of $A$ must have rank one. Again, this is not possible, since the left upper $2\times 2$ minor of $A$ is equal to one. That is a contradiction, which finishes the proof.
\end{proof}

\begin{Lemma}\label{singular}
For odd $n$ there are no singular matrices in the set $L_{C_{n}}^{-1}\cap(\Id+L_{C_{n}}^{\perp})$. For even $n$, the set of all singular matrices in $L_{C_{n}}^{-1}\cap(\Id+L_{C_{n}}^{\perp})$ is equal to
$$\{D\mathcal{C}_nD : D\in \mathcal D_n^\pm\},$$
where $\mathcal{C}_n$ is the checkerboard matrix, i.e. $$(\mathcal C_n)_{i,j}=\begin{cases}
 0,&\text{ if } i+j \text{ is odd}  \\
 1,&\text{ if } i+j \text{ is even}.
\end{cases}$$
\end{Lemma}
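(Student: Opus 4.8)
The plan is to prove the two inclusions of the asserted equality separately; the odd‑$n$ statement will drop out of the first one. For the inclusion ``every singular matrix in the intersection has the stated form'', I would start from a singular $A\in L_{C_n}^{-1}\cap(\Id+L_{C_n}^{\perp})$ and first pin down $\rk A$: since $a_{i,i}=1$ and $a_{i,i+1}=0$ for all $i$ (indices mod $n$), the top‑left $2\times2$ block of $A$ is the identity, so $\rk A\ge2$, and together with Lemma~\ref{rank2} this forces $\rk A=2$. Then I would recover $A$ from its $0$-th and $1$-st columns $u,v$: their restrictions to coordinates $0,1$ are the standard basis, hence $u,v$ span the ($2$-dimensional) column space, and comparing the first two coordinates of an arbitrary column of $A$ with $a_{0,j}u+a_{1,j}v$ and using symmetry yields $a_{i,j}=u_iu_j+v_iv_j$, i.e.\ $A=uu^{T}+vv^{T}$. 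Writing $r_i=(u_i,v_i)\in\CC^2$, the conditions $a_{i,i}=1$, $a_{i,i+1}=0$ become $r_i\cdot r_i=1$ and $r_i\cdot r_{i+1}=0$ for all $i$ mod $n$ (standard bilinear form), with $r_0=(1,0)$ and $r_1=(0,1)$.

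From $r_0,r_1$ and the relations $r_i\cdot r_{i+1}=0$, $r_{i+1}\cdot r_{i+1}=1$, an immediate induction shows $r_i=(\sigma_i,0)$ with $\sigma_i\in\{\pm1\}$ for $i$ even and $r_i=(0,\sigma_i)$ for $i$ odd. The cyclic relation $r_{n-1}\cdot r_0=0$ coming from $a_{n-1,0}=0$ is the crux: for odd $n$, the index $n-1$ is even, so $r_{n-1}=(\sigma_{n-1},0)$ and $r_{n-1}\cdot r_0=\sigma_{n-1}\neq0$, a contradiction — hence there are no singular matrices in the intersection for odd $n$. For even $n$ this relation is automatic, and $a_{i,j}=r_i\cdot r_j$ equals $\sigma_i\sigma_j$ when $i\equiv j\pmod2$ and $0$ otherwise, i.e.\ $A=D\mathcal C_nD$ with $D=\diag(\sigma_0,\dots,\sigma_{n-1})\in\mathcal D_n^{\pm}$.

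For the reverse inclusion (even $n$) I would first record that $\mathcal C_n=vv^{T}+ww^{T}$ for the indicator vectors $v,w$ of the even and odd coordinates, so it is symmetric of rank $2$, hence singular, and it lies in $\Id+L_{C_n}^{\perp}$ because its diagonal is $1$ and its cyclic super‑diagonal vanishes (evenness of $n$ is exactly what kills the corner entry). Since $L_{C_n}^{-1}$ and $\Id+L_{C_n}^{\perp}$ are invariant under conjugation by $\mathcal D_n^{\pm}$ and every $D\mathcal C_nD$ is singular, it then suffices to show $\mathcal C_n\in L_{C_n}^{-1}$. For this I would take $M\in L_{C_n}$ with zero diagonal, entry $1$ on every cycle edge, and entry $(-1)^{n/2}$ on the edge $\{0,n-1\}$ — so $M=N_n^{+}+(N_n^{+})^{T}$, the adjacency matrix of $C_n$, when $n\equiv0\pmod4$, and $M=N_n^{-}+(N_n^{-})^{T}$ when $n\equiv2\pmod4$. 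A direct computation shows $Mp=Mq=0$ for $p=(\cos(\pi j/2))_j$, $q=(\sin(\pi j/2))_j$, while the interior recurrence $w_{j+1}=-w_{j-1}$ ($1\le j\le n-2$) forces $\dim\ker M\le2$, so $\ker M=\mathrm{span}(p,q)$. Since $M$ is real symmetric, for all but finitely many $x$ the matrix $\Id+xM\in L_{C_n}$ is invertible and $(\Id+xM)^{-1}\in L_{C_n}^{-1}$, and as $x\to\infty$ the spectral decomposition of $M$ shows $(\Id+xM)^{-1}$ converges to the orthogonal projector $P$ onto $\ker M$; as $L_{C_n}^{-1}$ is closed, $P\in L_{C_n}^{-1}$. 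A short computation gives $P_{i,j}=\frac2n\cos\frac{\pi(i-j)}{2}$, whence $\frac n2 P=D\mathcal C_nD$ for a suitable $D\in\mathcal D_n^{\pm}$, and since $L_{C_n}^{-1}$ is a $\mathcal D_n^{\pm}$-invariant cone we conclude $\mathcal C_n\in L_{C_n}^{-1}$, finishing the proof.

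I expect the genuine obstacle to be this last step — producing $\mathcal C_n$ as an honest limit of inverses of matrices in $L_{C_n}$, which forces one to exhibit an $M\in L_{C_n}$ with a two‑dimensional kernel whose orthogonal projector is (up to $\mathcal D_n^{\pm}$) the checkerboard matrix, and the choice of $M$ splits according to $n\bmod4$ (reflecting the sign of $\cos(\pi n/2)$). By contrast the forward inclusion is routine once Lemma~\ref{rank2} supplies the rank bound: the reconstruction $A=uu^{T}+vv^{T}$ from the first two columns and the parity induction on the $r_i$ are elementary, and the odd/even dichotomy is forced by the single relation $a_{n-1,0}=0$.
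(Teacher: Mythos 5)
Your proof is correct, and while the forward inclusion follows essentially the paper's route, the reverse inclusion is genuinely different. For the forward direction both arguments rest on Lemma~\ref{rank2}; the paper phrases the rank-two structure as ``all odd rows are multiples of row one, all even rows of row two'' and then invokes the vanishing of the minor $\delta(i-1,i,i+2)(i,i+1,i+2)$ to force $a_{i,i+2}=\pm1$, whereas your Gram decomposition $A=uu^T+vv^T$ with $r_i=(u_i,v_i)$ extracts the signs $\sigma_i=\pm1$ directly from $r_i\cdot r_i=a_{i,i}=1$, so you never need the defining equations of $L_{C_n}^{-1}$ beyond what Lemma~\ref{rank2} already consumed; the odd/even dichotomy via the single cyclic relation $r_{n-1}\cdot r_0=0$ is the same mechanism as the paper's observation that for odd $n$ row $1$ would have to be a multiple of row $2$. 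The real divergence is the reverse inclusion: the paper disposes of it in one line by citing \cite{conner2023sullivant} --- $L_{C_n}^{-1}$ is cut out by certain $3\times3$ minors, so every rank-two matrix, in particular $\mathcal C_n$, lies on it --- while you construct $\mathcal C_n$ (up to $\mathcal D_n^\pm$ and scaling) as an honest limit $\lim_{x\to\infty}(\Id+xM)^{-1}$ for a signed adjacency matrix $M\in L_{C_n}$ with two-dimensional kernel spanned by $\bigl(\cos(\pi j/2)\bigr)_j$ and $\bigl(\sin(\pi j/2)\bigr)_j$. Your computation checks out (including the $n\bmod 4$ case split for the corner sign, the identification of the limiting projector $P_{i,j}=\tfrac2n\cos\tfrac{\pi(i-j)}2$, and the cone/invariance properties of $L_{C_n}^{-1}$ needed to rescale and conjugate). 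What your approach buys is self-containedness --- it works straight from the definition of $L_{C_n}^{-1}$ as a closure of inverses and does not depend on knowing generators of the Sullivant--Talaska ideal; what it costs is length, and the paper's citation-based argument is the more economical choice given that \cite{conner2023sullivant} is already used elsewhere in the proof of Theorem~\ref{thm:chorML1}.
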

\begin{proof}
 Consider a singular matrix $A\in L_{C_{n}}^{-1}\cap(\Id+L_{C_{n}}^{\perp})$. By Lemma~\ref{rank2}, the rank of $A$ is at most two. However, the first two rows of $A$ are linearly independent. Therefore, the third row must be a combination of the first two. Since $a_{3,2}=0=a_{1,2}=0$ and $a_{2,2}=1$, the third row must be a multiple of the first row.

By repeating this argument we see that all rows $a_1, a_3, a_5, \dots , a_{2k+1}$ are multiples of each other. For odd $n$ it means that $a_1$ is a multiple of $a_2$, which is a contradiction. Thus, for odd $n$, there are no such matrices.

For even $n$, we have that all odd rows are multiples of $a_1$ and all even rows are multiples of $a_2$. This implies that $a_{i,j}\neq 0$ if and only if $2|i-j$. Moreover, by the vanishing of the minor $\delta(i-1,i,i+2)(i,i+1,i+2)$, we must have $a_{i,i+2}^2=1$. This shows that every odd row is equal to $a_1$ or $-a_1$. By acting with a suitable $D\in \mathcal D_n^\pm$, we get to the situation where all even rows and all odd rows are equal. Since $a_{i,i}=1$, this means that after acting we have $A=\mathcal C_n$.

Note that all such matrices actually belong to $L_{C_n}^{-1}$, since every rank two matrix belongs there. The reason is that the variety $L_{C_n}^{-1}$ is defined by vanishing of some $3\times 3$ minors \cite{conner2023sullivant}. This concludes the proof of the lemma.
\end{proof}
\begin{Lemma}\label{odd-intersection}
We have the following equalities:
$$L_{C_{2n+1}}^{-1}\cap(\Id+L_{C_{2n+1}}^{\perp})=\{\Id\}\cup \{DM_{2n+1}^+(x)^{-1}D : D\in\mathcal D_{2n+1}^\pm;P_{n-1}(x)+xP_{n-2}(x)=0\}.$$
$$L_{C_{2n}}^{-1}\cap(\Id+L_{C_{2n}}^{\perp})=\{\Id\}\cup \{DM_{2n}^+(x)^{-1}D : D\in\mathcal D_{2n}^\pm;P_{n-1}(x)-x^2P_{n-3}(x)=0\}\cup $$
$$\cup \{DM_{2n}^-(x)^{-1}D : D\in\mathcal D_{2n}^\pm;P_{n-2}(x)=0\}\cup \{D\mathcal{C}_{2n}D : D\in\mathcal{D}_{2n+1}^\pm\}.$$
\end{Lemma}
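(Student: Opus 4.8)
The plan is to assemble this from the structural results already proven: Lemma~\ref{cyclic} reduces every point of the intersection to an $N_n^+$- or $N_n^-$-invariant matrix up to the $\mathcal D_n^\pm$-action; Lemma~\ref{rank2} and Lemma~\ref{singular} handle the singular case; and Lemma~\ref{regular-odd/even} pins down exactly the regular invariant matrices whose inverse lands in $\Id+L_{C_n}^\perp$. So the proof is essentially bookkeeping: combine these and track the $\mathcal D_n^\pm$-orbits. First I would note that $L_{C_n}^{-1}\cap(\Id+L_{C_n}^\perp)$ is $\mathcal D_n^\pm$-invariant (as observed right before Lemma~\ref{cyclic}), so it suffices to list the invariant representatives and then close up under the group. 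Take any $A$ in the intersection. If $A$ is singular, Lemma~\ref{singular} tells us that for odd $n$ there is nothing, and for even $n$ the orbit is exactly $\{D\mathcal C_{2n}D\}$, giving the last term in the even case. If $A$ is regular, Lemma~\ref{cyclic} gives $D$ with $DAD$ equal to $N_n^+$- or $N_n^-$-invariant (and by Remark~\ref{noneed}, for odd $n$ we may always take the $N_n^+$ case, which is why the odd formula has only the $M_{2n+1}^+$ family).

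Next I would feed the regular invariant matrix into Lemma~\ref{regular-odd/even}. For $n=2m+1$ odd: the only $N_n^+$-invariant regular non-identity matrix with inverse in $\Id+L_{C_n}^\perp$ is $M_n^+(x)$ with $P_{m-1}(x)+xP_{m-2}(x)=0$; setting $n=2m+1$, i.e.\ $m = n$ in the statement's indexing where the cycle is $C_{2n+1}$, this is exactly $P_{n-1}(x)+xP_{n-2}(x)=0$. Together with the identity and closing under $\mathcal D_{2n+1}^\pm$ (noting $\Id$ is fixed by the whole group), we get the first displayed equality. For $n=2m$ even (write the cycle as $C_{2n}$, so $m=n$ in the statement): Lemma~\ref{regular-odd/even} gives the two families $M_{2n}^+(x)$ with $P_{n-1}(x)-x^2P_{n-3}(x)=0$ and $M_{2n}^-(x)$ with $P_{n-2}(x)=0$; adjoining $\Id$, adjoining the singular orbit $\{D\mathcal C_{2n}D\}$ from Lemma~\ref{singular}, and closing each family under $\mathcal D_{2n}^\pm$ yields the second displayed equality.

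One point that needs care is the direction of inclusion: Lemma~\ref{regular-odd/even} characterizes which invariant $A$ have $A^{-1}\in\Id+L_{C_n}^\perp$, but the elements appearing in our set are the inverses $M_n^\pm(x)^{-1}$, which lie in $L_{C_n}^{-1}$ by construction and in $\Id+L_{C_n}^\perp$ precisely by that lemma — so membership is immediate, and conversely every regular element of the intersection arises this way after applying some $D$. I should also check the two inclusions are not over- or under-counting: distinct roots $x$ of the relevant polynomial give genuinely distinct matrices (the off-diagonal entry is a nonzero multiple of $x$), and the identity is not of the form $M_n^\pm(x)^{-1}$ for any admissible root since those are non-identity by Lemma~\ref{regular}; the $\mathcal D_n^\pm$-orbit of $\Id$ is just $\{\Id\}$. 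The only mildly delicate step — the main obstacle, such as it is — is making sure the case split in Lemma~\ref{cyclic} (whether $A = \Id$, or $A$ invariant of $N_n^+$/$N_n^-$ type) together with Remark~\ref{noneed} for odd $n$ and Lemma~\ref{singular} for the singular matrices is genuinely exhaustive and that after closing under $\mathcal D_n^\pm$ we recover every element; once that is laid out, the rest is substitution. This completes the argument.
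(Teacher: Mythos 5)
Your proposal is correct and follows essentially the same route as the paper, whose proof is exactly the one-line combination of Lemma~\ref{cyclic}, Lemma~\ref{regular-odd/even}, Lemma~\ref{singular} and Remark~\ref{noneed} that you spell out; your extra bookkeeping (orbit closure under $\mathcal D_n^\pm$, the inclusion in both directions, the index shift $m=n$) is just a more explicit version of what the paper leaves implicit.
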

\begin{proof}
It is a direct consequence of Lemma \ref{cyclic}, Lemma \ref{regular-odd/even}, and Lemma \ref{singular}. Note that, by Remark~\ref{noneed}, we do not need to consider the case when the matrix is $N^-_{2n+1}$-invariant.
\end{proof}

\begin{Proposition}\label{ML-odd>1}
Let $n\ge 2$ be an integer.
Then
$$\MLdeg(C_{2n+1})\ge 1+2^{2n}\cdot |\{x\in\CC:P_{n-1}(x)+xP_{n-2}(x)=0\}|.$$
$$\MLdeg(C_{2n})\ge 1+2^{2n-2}\cdot |\{x\in\CC:P_{n-1}(x)-x^2P_{n-3}(x)=0\}|+$$
$$+2^{2n-2}\cdot |\{x\in\CC:P_{n-2}(x)=0\}|+2^{2n-2}.$$
In particular, for $n\ge 4$ we have $\MLdeg(C_{n})>1.$
\end{Proposition}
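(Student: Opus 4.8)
The plan is to extract the cardinality $N_0$ of $L_{C_n}^{-1}\cap(\Id+L_{C_n}^\perp)$ from Lemma~\ref{odd-intersection} and feed it into Lemma~\ref{lem:plusone}, applied to the degree-$\MLdeg(C_n)$ rational map $f\colon\PP(L_{C_n}^{-1})\dashrightarrow\PP(S^2(\CC^n)/L_{C_n}^\perp)$ already used in the proof of Theorem~\ref{thm:chorML1}, with $y$ the class of $\Id$. First I note that $f^{-1}(y)$ is exactly the projectivization of $L_{C_n}^{-1}\cap(\Id+L_{C_n}^\perp)$: any $A\in L_{C_n}^{-1}$ whose class lies in the fiber can be rescaled to a matrix in $\Id+L_{C_n}^\perp$, and every such matrix has all diagonal entries equal to $1$, hence does not lie in $L_{C_n}^\perp$, i.e.\ avoids the base locus of $f$. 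By Lemma~\ref{odd-intersection} this fiber is finite, so it is a disjoint union of $N_0$ isolated points, each its own connected component with closure off the base locus. Lemma~\ref{lem:plusone} applied with $k=N_0-1$ (the fiber strictly contains any $N_0-1$ of its points) then yields $\MLdeg(C_n)\ge N_0$.

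It remains to evaluate $N_0$ using the explicit description in Lemma~\ref{odd-intersection}, i.e.\ to count the orbits and their sizes under the conjugation action $A\mapsto DAD$ of $\mathcal D_N^\pm$ (where $N$ is the cycle length), which multiplies the $(i,j)$-entry of $A$ by $\epsilon_i\epsilon_j$. For $x\ne 0$ the nonzero off-diagonal entries of $M_N^{\pm}(x)$ trace out the whole cycle, which forces the stabilizer to be $\{\pm\Id\}$ and the orbit to have size $2^{N-1}$; the checkerboard matrix $\mathcal C_N$ has stabilizer of order $4$ (the even-indexed and odd-indexed signs being independently free), hence orbit size $2^{N-2}$. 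I then verify that distinct roots $x$ and distinct families give disjoint orbits, where the parity of $N$ matters. Conjugating $M_m(x)$ by $\diag(1,-1,1,\dots)$ yields $M_m(-x)$ without changing the determinant, so each $P_m$ is an \emph{even} polynomial and all the root sets in Lemma~\ref{odd-intersection} are invariant under $x\mapsto -x$; for $N$ even the element $\diag(1,-1,\dots,-1)$ conjugates $M_N^{\pm}(x)$ to $M_N^{\pm}(-x)$, so $x$ and $-x$ give the \emph{same} orbit, whereas for $N$ odd a parity obstruction prevents $DM_N^+(x)D=M_N^+(-x)$, so distinct roots give distinct orbits. Since $0$ is never a root (all $P_m(0)=1$), the $M^{\pm}$-families therefore contribute $2^{N-1}$ times \emph{half} the number of roots when $N$ is even and $2^{N-1}$ times the number of roots when $N$ is odd; assembling this with the checkerboard orbit (only in the even case) and the point $\Id$ reproduces exactly the two displayed inequalities.

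For the concluding assertion I only need positivity of the extra term. When $N=2n\ge 4$ the checkerboard orbit alone gives $\MLdeg(C_{2n})\ge 1+2^{2n-2}\ge 1+2^{2}>1$. When $N=2n+1\ge 5$, i.e.\ $n\ge 2$, it is enough that $P_{n-1}(x)+xP_{n-2}(x)$ is nonconstant, for then it has a complex root and $\MLdeg(C_{2n+1})\ge 1+2^{2n}>1$; nonconstancy holds because $P_{n-1}$ only involves even powers of $x$ while $xP_{n-2}$ only involves odd powers, so no cancellation of leading terms occurs and the degree is at least $\deg(xP_{n-2})\ge 1$.

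The main obstacle I anticipate is the bookkeeping of the second step: confirming that the orbit families listed in Lemma~\ref{odd-intersection} are pairwise disjoint and, above all, getting the parity-dependent factor of $2$ right, which is why the $M_N^{\pm}$-families contribute $2^{2n-2}\cdot|\{x\}|$ rather than $2^{2n-1}\cdot|\{x\}|$ for even cycles. The remaining ingredients — identifying the fiber, checking it avoids the base locus, invoking Lemma~\ref{lem:plusone}, and the elementary degree count — are routine.
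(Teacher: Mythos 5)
Your proposal is correct and follows essentially the same route as the paper: use Lemma~\ref{odd-intersection} to identify the fiber over $[\Id]$ as a finite set of isolated points, count it via orbit sizes under $\mathcal D_n^\pm$ (stabilizer $\{\pm\Id\}$ for $M_n^\pm(x)$, order $4$ for $\mathcal C_n$, with the $x\leftrightarrow -x$ identification in the even case), and conclude by lower semicontinuity of the number of points in the fiber. The only differences are cosmetic: the paper invokes a Stacks-project lemma on general cuts where you reuse Lemma~\ref{lem:plusone}, and you spell out the orbit-disjointness (parity obstruction) and the ``in particular'' clause, which the paper leaves implicit.
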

\begin{proof}
\textbf{For the odd case:} For any $x\neq 0$, the stabilizer of $M_{2n+1}^+(x)$ in the action given by $\mathcal D_{2n+1}^\pm$ consists of only two elements: $\Id$ and $-\Id$. Thus, the orbit of $M_{2n+1}^+(x)$ under the action of $\mathcal D_{2n+1}^\pm$ has $2^{2n}$ elements.

 By Lemma~\ref{odd-intersection}, the set $$L_{C_{2n+1}}^{-1}\cap(\Id+L_{C_{2n+1}}^{\perp})$$ consists of $1+2^{2n}\cdot |\{x\in\CC:P_{n-1}(x)+xP_{n-2}(x)=0\}|$ isolated points. For a general cut, the number of intersection points can only increase \cite[Lemma 37.53.7]{stacks-project}, proving the result.

\textbf{For the even case:} the situation is similar. However, the polynomials $P_{n-1}(x)-x^2P_{n-3}(x)$ and $P_{n-2}(x)$ are even. The orbits of $M_{2n}^+(x)$ and $M_{2n}^+(-x)$ under the action of $\mathcal D^\pm_n$ are the same and consist of $2^{2n-1}$ elements. The same is true for the orbits of $M_{2n}^-(x)$ and $M_{2n}^-(-x)$.

The stabilizer of the checkerboard matrix $\mathcal C_{2n}$ consists of four elements. Except for $\Id$ and $-\Id$, there are also the matrices that have $-1$ in all odd or all even positions. Thus, the orbit of $\mathcal C_{2n}$ consists of $2^{2n-2}$ elements.

Putting this together with Lemma~\ref{odd-intersection}, we see that the set $L_{C_{2n}}^{-1}\cap(\Id+L_{C_{2n}}^{\perp})$ consists of
$$1+2^{2n-2}\cdot |\{x\in\CC:P_{n-1}(x)-x^2P_{n-3}(x)=0\}|+$$
$$+2^{2n-2}\cdot |\{x\in\CC:P_{n-2}(x)=0\}|+2^{2n-2}$$
isolated points. This proves the result also in the even case.
\end{proof}

\subsection{Roots of $P_n(x)$}

Let us define the following $n\times n$ matrix:

$$\widetilde{M}_n(\alpha):=\begin{pmatrix}
2\cos\alpha&1&0&0&\dots&0&0\\
1&2\cos\alpha&1&0&\dots&0&0\\
0&1&2\cos\alpha&1&\dots&0&0\\
0&0&1&2\cos\alpha&\dots&0&0\\
\vdots&\vdots&\vdots&\vdots&\ddots&\vdots&\vdots\\
0&0&0&0&\dots&2\cos\alpha&1\\
0&0&0&0&\dots&1&2\cos\alpha
\end{pmatrix}.$$

Note that

$$\widetilde{M}_n(\alpha)=(2\cos\alpha)\cdot M_n\left(\frac{1}{2\cos\alpha}\right),$$

the formal difference is that $\widetilde{M}_n(\alpha)$ is also defined when $\cos\alpha=0$.

\begin{Proposition}\label{sin-formula}
The following identity holds:
$$\det(\widetilde{M}_n(\alpha))=\frac{\sin((n+1)\alpha)}{\sin\alpha}.$$

\end{Proposition}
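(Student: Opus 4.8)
The plan is to reduce the identity to a three-term recurrence and then match it against a trigonometric recurrence. Write $D_n := \det(\widetilde{M}_n(\alpha))$, with the conventions $D_0 = 1$ (the empty determinant) and $D_1 = 2\cos\alpha$. Laplace expansion of $D_n$ along the first row produces two terms: the $(1,1)$-entry $2\cos\alpha$ times $D_{n-1}$, and $-1$ times the minor obtained by deleting the first row and second column of $\widetilde{M}_n(\alpha)$. In that latter $(n-1)\times(n-1)$ minor the first column has a single nonzero entry (a $1$, coming from the original $(2,1)$-position) sitting in its top-left corner, so expanding along that column isolates exactly $D_{n-2}$. Altogether
$$D_n = 2\cos\alpha\cdot D_{n-1} - D_{n-2}, \qquad n\ge 2.$$

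Next I would observe that, for $\sin\alpha\ne 0$, the function $E_n := \sin((n+1)\alpha)/\sin\alpha$ obeys the very same recurrence and the same initial data. Indeed, the product-to-sum formula gives $\sin((n+1)\alpha) + \sin((n-1)\alpha) = 2\cos\alpha\,\sin(n\alpha)$, which after dividing by $\sin\alpha$ reads $E_n = 2\cos\alpha\,E_{n-1} - E_{n-2}$; and $E_0 = 1$, $E_1 = \sin(2\alpha)/\sin\alpha = 2\cos\alpha$. A one-line induction then yields $D_n = E_n$ for every $n\ge 0$, proving the identity whenever $\sin\alpha\ne 0$.

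Finally, to cover the excluded values $\alpha\in\pi\ZZ$ (where the right-hand side has a removable singularity), I would note that the recurrence shows $D_n$ is a polynomial in $\cos\alpha$, and likewise $\sin((n+1)\alpha)/\sin\alpha$ extends to a polynomial in $\cos\alpha$ (the Chebyshev polynomial of the second kind evaluated at $\cos\alpha$); two polynomial functions of $\cos\alpha$ agreeing on the dense set $\{\alpha : \sin\alpha\ne 0\}$ coincide everywhere, so the identity holds for all $\alpha$ with the right-hand side read as this continuous extension. I do not expect a genuine obstacle here; the only points requiring care are obtaining the correct sign $-1$ on $D_{n-2}$ in the Laplace expansion and the harmless degenerate case $\sin\alpha = 0$.
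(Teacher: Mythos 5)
Your proposal is correct and follows essentially the same route as the paper: both derive the three-term recurrence $D_n = 2\cos\alpha\, D_{n-1} - D_{n-2}$ (you via direct Laplace expansion, the paper by specializing its Lemma on determinant recurrences to $m=1$) and then verify that $\sin((n+1)\alpha)/\sin\alpha$ satisfies the same recurrence and initial data. Your explicit treatment of the degenerate case $\sin\alpha=0$ is a small point of extra care that the paper leaves implicit.
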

\begin{proof}
It is trivial to check this identity for $n=0$ and $n=1$. By Lemma~\ref{det-recurrence} for $m=1$, the sequence $\{\det(\widetilde{M}_n(\alpha))\}_n$ satisfies the following recurrence:
$$\det(\widetilde{M}_n(\alpha))=2\cos\alpha\det(\widetilde{M}_{n-1}(\alpha))-\det(\widetilde{M}_{n-2}(\alpha)).$$

Thus, it is sufficient to prove that the sequence $\{\sin {n\alpha}\}_n$ satisfies the same recurrence. Indeed, we have:
\begin{align*}
\sin{n\alpha}&=\sin((n-1)\alpha)\cos\alpha+\cos((n-1)\alpha)\sin\alpha  \\
&=\sin((n-1)\alpha)\cos\alpha+\cos((n-2)\alpha)\cos\alpha\sin\alpha-\sin((n-2)\alpha)\sin^2\alpha\\
&=\sin((n-1)\alpha)\cos\alpha+\sin((n-1)\alpha)\cos\alpha-\sin((n-2)\alpha)\cos^2\alpha-\sin((n-2)\alpha)\sin^2\alpha\\
&=2\sin((n-1)\alpha)\cos\alpha-\sin((n-2)\alpha).
\end{align*}
Hence, we obtain the desired identity.
\end{proof}

\begin{Corollary}\label{roots}
The roots of the polynomial $P_n(x)$ are exactly the numbers
$$\frac{1}{2\cos\left(\frac{k\pi}{n+1}\right)},\ 1\le k\le n, \, k\neq\frac{n+1}2.$$
\end{Corollary}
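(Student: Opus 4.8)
The plan is to read off the roots of $P_n(x)$ directly from Proposition \ref{sin-formula} via the scaling identity $\widetilde M_n(\alpha) = (2\cos\alpha)\cdot M_n\!\left(\tfrac{1}{2\cos\alpha}\right)$. Taking determinants gives $\det\widetilde M_n(\alpha) = (2\cos\alpha)^n\, P_n\!\left(\tfrac{1}{2\cos\alpha}\right)$, so by Proposition \ref{sin-formula} we get the key relation $(2\cos\alpha)^n\, P_n\!\left(\tfrac{1}{2\cos\alpha}\right) = \dfrac{\sin((n+1)\alpha)}{\sin\alpha}$ whenever $\cos\alpha\neq 0$ and $\sin\alpha\neq 0$. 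Thus, for such $\alpha$, the value $x = \tfrac{1}{2\cos\alpha}$ is a root of $P_n$ precisely when $\sin((n+1)\alpha)=0$, i.e.\ when $(n+1)\alpha\in\pi\ZZ$, so $\alpha = \tfrac{k\pi}{n+1}$.

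First I would record that $P_n(x)$ has degree $n$: from the recurrence $P_n(x) = P_{n-1}(x) - x^2 P_{n-2}(x)$ (Lemma \ref{det-recurrence} with $m=1$) together with $P_0=P_1=1$, an easy induction shows $\deg P_n = n$, and in fact $P_n(0)=1$ so $0$ is never a root. Next I would enumerate the candidate values $\alpha_k = \tfrac{k\pi}{n+1}$ for $1\le k\le n$; these satisfy $0<\alpha_k<\pi$, hence $\sin\alpha_k\neq 0$, so the relation above applies as long as $\cos\alpha_k\neq 0$. The excluded case $\cos\alpha_k=0$ happens exactly when $\alpha_k=\tfrac\pi2$, i.e.\ $k=\tfrac{n+1}{2}$ (only possible when $n$ is odd); this is precisely the value omitted in the statement, and one sees $\tfrac{1}{2\cos\alpha_k}$ is anyway undefined there, matching the exclusion. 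For the remaining $k$, the number $x_k := \tfrac{1}{2\cos\alpha_k}$ is a well-defined root of $P_n$.

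It then remains to check that the $x_k$ are pairwise distinct and that there are exactly $n$ of them, so that they account for all roots of the degree-$n$ polynomial $P_n$. The map $k\mapsto \cos\!\left(\tfrac{k\pi}{n+1}\right)$ is injective on $\{1,\dots,n\}$ since $\cos$ is injective on $(0,\pi)$, and hence $k\mapsto x_k = \tfrac{1}{2\cos\alpha_k}$ is injective on the allowed index set. When $n$ is even, all $n$ indices $k=1,\dots,n$ are allowed, giving $n$ distinct roots. When $n$ is odd, the index $k=\tfrac{n+1}{2}$ is dropped, leaving $n-1$ values $x_k$; but in the odd case $P_n(x) = P_{(n-1)/2}(x)\bigl(P_{(n+1)/2}(x) - x^2 P_{(n-3)/2}(x)\bigr)$ is a product related to Corollary \ref{recurrence}(b), and more simply one checks that for $n$ odd the polynomial $P_n$ has $x=\infty$ behaviour forcing the leading coefficient to vanish in the substituted variable — concretely, the relation shows $P_n\!\left(\tfrac{1}{2\cos\alpha}\right)$ vanishes to the expected order and a degree count confirms $P_n$ has only $n-1$ finite... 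I would instead argue cleanly: the polynomial $\prod_k (x - x_k)$ over allowed $k$ divides $P_n$ (distinct roots), and comparing degrees with the parity case analysis — $n$ roots when $n$ even, and when $n$ odd noting $P_n$ actually has degree $n$ but $x=0$ is not a root while one symmetric pair collapses — shows the product equals $P_n$ up to the constant $P_n(0)=1$. The main obstacle is this final bookkeeping in the odd case: making sure the count of distinct $x_k$ matches $\deg P_n$ exactly, which I would settle by the direct observation that for $n$ odd, $\cos\!\left(\tfrac{k\pi}{n+1}\right)$ and $\cos\!\left(\tfrac{(n+1-k)\pi}{n+1}\right)$ are negatives of each other, so the roots come in pairs $\pm x_k$, and the polynomial $P_n$ is (for $n$ odd) even up to the factor structure, confirming the root set is exactly as claimed.
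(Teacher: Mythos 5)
There is a genuine gap, and it sits exactly at the point your own write-up identifies as "the main obstacle": the degree count in the odd case. Your claim that an easy induction from $P_n(x)=P_{n-1}(x)-x^2P_{n-2}(x)$ gives $\deg P_n=n$ is false for odd $n$. For example, $P_3(x)=P_2(x)-x^2P_1(x)=1-2x^2$ has degree $2$, and $P_5(x)=1-4x^2+3x^4$ has degree $4$. The reason is that $P_n$ is an \emph{even} polynomial for every $n$ (conjugating $M_n(x)$ by $\diag(1,-1,1,-1,\dots)$ shows $P_n(-x)=P_n(x)$), so its degree is always even; hence $\deg P_n=n$ for even $n$ but $\deg P_n\le n-1$ for odd $n$. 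Because your stated degree is wrong, your third paragraph is forced into the incoherent detour about "$x=\infty$ behaviour" and "$P_n$ is even up to the factor structure," and it never cleanly closes the case $n$ odd, where you exhibit only $n-1$ distinct roots against a polynomial you have asserted to have degree $n$. The paper resolves precisely this point by observing that $P_n$ is an even function, so for odd $n$ its degree is at most $n-1$, matching the $n-1$ listed roots; with that single observation your argument goes through and coincides with the paper's (both identify the roots via Proposition \ref{sin-formula} and then match the count against a degree bound). Note also that one only needs the upper bounds $\deg P_n\le n$ (resp.\ $\le n-1$), since the explicit distinct roots supply the lower bound.
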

\begin{proof}
Since $$\sin\left((n+1)\frac{k\pi}{n+1}\right)=\sin(k\pi)=0,$$
by Proposition~\ref{sin-formula}, one can see that all of these numbers are roots of $P_n(x)$. In addition, the degree of the polynomial $P_n(x)$ is at most $n$. The polynomial $P_n(x)$ is an even function since multiplying all odd rows and columns by -1 does not change the determinant. Therefore, for odd $n$, the polynomial $P_n(x)$ is of degree at most $n-1$.

This shows that the list of roots is complete.
\end{proof}

\begin{Theorem}\label{ML_odd_bound}
$$\MLdeg(C_{n})\ge 1+(n-3)\cdot 2^{n-2}. $$
\end{Theorem}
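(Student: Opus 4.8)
The plan is to combine Proposition~\ref{ML-odd>1} with the explicit description of the roots of $P_n(x)$ from Corollary~\ref{roots}, by showing that the relevant polynomials appearing in the lower bounds have the "expected" number of distinct nonzero roots, and that these roots are all distinct. I would split into the odd and even cases, matching the two displayed inequalities of Proposition~\ref{ML-odd>1}.

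For $n=2m+1$ odd, Proposition~\ref{ML-odd>1} gives $\MLdeg(C_n)\ge 1 + 2^{n-1}\cdot N$ where $N=|\{x\in\CC:P_{m-1}(x)+xP_{m-2}(x)=0\}|$; to reach the claimed bound $1+(n-3)2^{n-2}=1+(2m-2)2^{n-2}=1+(m-1)2^{n-1}$, it suffices to show $N\ge m-1$. By Corollary~\ref{recurrence}(a), $P_{2m-2}(x)=(P_{m-1}(x)-xP_{m-2}(x))(P_{m-1}(x)+xP_{m-2}(x))$, and by Lemma~\ref{lemmas}(e) these two factors are coprime (the first divides $P_{2m-2}$, and any common factor of both divides $P_{m-1}$ and $xP_{m-2}$, hence $P_{m-1}$ and $P_{m-2}$ since $x\nmid P_{m-1}$). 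By Corollary~\ref{roots}, $P_{2m-2}(x)$ has $2m-2$ distinct roots when $2m-2$ is even — which it is — so its two coprime factors partition these $2m-2$ roots. Each factor has degree at most $m-1$, so each must have exactly $m-1$ of them; in particular $P_{m-1}(x)+xP_{m-2}(x)$ has exactly $m-1$ distinct (necessarily nonzero, since $P_{2m-2}(0)=1$) roots. Hence $N=m-1$ and we are done in the odd case.

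For $n=2m$ even, Proposition~\ref{ML-odd>1} gives $\MLdeg(C_n)\ge 1+2^{n-2}(N_1+N_2+1)$ with $N_1=|\{x:P_{m-1}(x)-x^2P_{m-3}(x)=0\}|$ and $N_2=|\{x:P_{m-2}(x)=0\}|$; the target $1+(n-3)2^{n-2}=1+(2m-3)2^{n-2}$ requires $N_1+N_2+1\ge 2m-3$, i.e. $N_1+N_2\ge 2m-4$. By Corollary~\ref{roots}, $P_{m-2}(x)$ has exactly $m-2$ distinct (nonzero) roots, so $N_2=m-2$ and it remains to show $N_1\ge m-2$. For this I would run the same argument as in the odd case: by Lemma~\ref{lemmas}(c), $P_{2m-3}(x)+x^{2m-3}$ factors, and more usefully, by Corollary~\ref{recurrence}(b), $P_{2m-3}(x)=P_{m-2}(x)\bigl(P_{m-1}(x)-x^2P_{m-3}(x)\bigr)$; the two factors are coprime by Lemma~\ref{lemmas}(e) (a common factor would divide $P_{m-2}$ and $P_{m-1}-x^2P_{m-3}$, hence also $P_{m-1}$ modulo the $P_{m-2}$ relation — more directly, $P_{m-1}(x)-x^2P_{m-3}(x)\mid P_{2m-3}$ is coprime with $P_{2m-1}$ by Lemma~\ref{lemmas}(e), but here I just need coprimality of the two factors of $P_{2m-3}$, which follows since a common factor divides $P_{m-2}$ and $P_{m-1}-x^2P_{m-3}$ and the recurrence $P_{m-1}=P_{m-2}-x^2P_{m-3}$ then forces it to divide $x^2P_{m-3}$, hence $P_{m-3}$, hence by Lemma~\ref{lemmas}(e) it is constant). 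Since $2m-3$ is odd, Corollary~\ref{roots} says $P_{2m-3}(x)$ has exactly $2m-4$ distinct roots, all nonzero. These are partitioned between the two coprime factors; $P_{m-2}$ contributes $m-2$ of them, so $P_{m-1}(x)-x^2P_{m-3}(x)$ contributes the remaining $m-2$, giving $N_1=m-2$. Then $N_1+N_2+1=2m-3$ and the even bound follows.

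The main obstacle I anticipate is bookkeeping the coprimality claims carefully enough that the root counts of the factors add up exactly: one must confirm that (i) each polynomial named by Corollary~\ref{roots} really has the stated number of \emph{distinct} roots (no multiplicities — this is where the sine formula of Proposition~\ref{sin-formula} and the degree bound from the evenness of $P_n$ are used), (ii) $0$ is never among these roots (clear from $P_k(0)=1$), and (iii) the degrees of the two coprime factors are small enough that each is forced to carry exactly half of the roots rather than fewer. A minor subtlety is that $(n-3)\cdot 2^{n-2}$ must be interpreted correctly for small $n$ (for $n=4$, $m=2$: here $P_{m-3}=P_{-1}$ and $P_{m-2}=P_0=1$ has no roots, so $N_1+N_2=0$ and the bound reads $\MLdeg(C_4)\ge 1+2^2=5$, consistent with the known value), so I would either state the theorem for $n\ge 4$ or check the tiny cases $n=3,4$ by hand against the listed values $1,5$.
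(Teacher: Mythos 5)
Your proposal follows essentially the same route as the paper: factor $P_{2m-2}$ via Corollary~\ref{recurrence}(a) in the odd case and $P_{2m-3}$ via Corollary~\ref{recurrence}(b) in the even case, count the distinct roots of these products using Corollary~\ref{roots}, distribute them among the factors, and feed the counts into Proposition~\ref{ML-odd>1}. The odd case is handled exactly as in the paper (the coprimality check you add is harmless but unnecessary; the degree bound on each factor already forces the count). One inaccuracy in your even case: Corollary~\ref{roots} does \emph{not} say that $P_{m-2}$ has exactly $m-2$ distinct roots --- when $m-1$ is even the index $k=\tfrac{m-1}{2}$ is excluded, equivalently $P_{m-2}$ is an even polynomial of odd nominal degree and so has degree at most $m-3$; e.g.\ $P_3(x)=1-2x^2$ has two roots, not three. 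So for odd $m$ one gets $N_2=m-3$ and $N_1=m-1$, and your intermediate claims ``$N_2=m-2$'' and ``$N_1=m-2$'' both fail. This does not sink the proof, because, as you correctly observe at the outset, only the sum $N_1+N_2$ enters the bound, and your partition argument (the $2m-4$ distinct roots of $P_{2m-3}$ are distributed between the two factors) already yields $N_1+N_2\ge 2m-4$ without knowing the individual counts --- which is precisely how the paper phrases it, asserting only that the two factors ``have together $2m-4$ distinct roots.'' I would simply delete the individual root counts and keep the combined one.
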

\begin{proof}
We consider two cases, depending on the parity of $n$:

\textbf{For the odd case $n=2m+1$:} By Corollary \ref{recurrence}(a), we have that
$$P_{2m-2}(x)=(P_{m-1}(x)-xP_{m-2}(x))(P_{m-1}(x)+xP_{m-2}(x)).$$

The polynomial $P_{2m-2}(x)$ has $2m-2$ distinct roots, by Corollary~\ref{roots}. Each of the factors is a polynomial of degree at most $m-1$, therefore, each of them must have $m-1$ distinct roots. In particular, the polynomial $P_{m-1}(x)+xP_{m-2}(x)$ has $m-1$ distinct roots. Hence, by Proposition \ref{ML-odd>1}, we conclude the result.

\textbf{For the even case $n=2m$:} By Corollary \ref {recurrence}(b), we have $$P_{2m-3}(x)=P_{m-2}(x)(P_{m-1}(x)-x^2P_{m-3}(x)).$$

By Corollary \ref{roots}, the polynomial $P_{2m-3}(x)$ has $2m-4$ distinct roots. Thus, the polynomials $P_{m-2}(x)$ and $P_{m-1}(x)-x^2P_{m-3}(x)$ have together $2m-4$ distinct roots. Hence, by Proposition \ref{ML-odd>1}, we conclude the result.
\end{proof}

\vspace{.2in}


\begin{thebibliography}{}
\vspace{.1in}
\bibitem{lauritzen1996graphical} Steffen L. Lauritzen. \textit{Graphical models}, volume 17. Clarendon Press, 1996.
\bibitem{SethBook} Seth Sullivant. \textit{Algebraic Statistics}, volume 194 of \textit{Graduate Studies in Mathematics}. American Mathematical Society, Providence, RI, 2018.
\bibitem{StUh} Bernd Sturmfels, and Caroline Uhler. Multivariate Gaussian, semidefinite matrix completion, and convex algebraic geometry. \textit{Ann. Inst. Statist. Math.,} \textbf{62}(4), 603--638, 2010.
\bibitem{AmendolaEtAl} Carlos Am\'endola, Lukas Gustaffson, Kathl\'en Kohn, Orlando Marigliano, and Anna Seigal. The maximum likelihood degree of linear spaces of symmetric matrices. \textit{Le Matematiche}, \textbf{76}(2), 2021.
\bibitem{dms2021} Rodica Andreea Dinu, Mateusz Micha\l{}ek, Tim Seynnaeve. Applications of intersection theory: from maximum likelihood to chromatic polynomials. \textit{	arXiv:2111.02057}, 2021.
\bibitem{drtonSS} Mathias Drton, Bernd Sturmfels, and Seth Sullivant. \textit{Lectures on algebraic statistics}, volume 39. Springer Science \& Business Media, 2008.
\bibitem{uhler2011geometry} Caroline Uhler. \textit{Geometry of maximum likelihood estimation in Gaussian graphical models}. PhD thesis, University of California, Berkeley, 2011.
\bibitem{GeigerMeekSturmfels} Dan Geiger, Christopher Meek, and Bernd Sturmfels. On the toric algebra of graphical models. \textit{The Annals of Statistics}, \textbf{34}(3), 1463--1492, 2006.
\bibitem{cox2024homaloidal} Shelby Cox, Pratik Misra, and Pardis Semnani. Homaloidal polynomials and Gaussian models of maximum likelihood one. \textit{	arXiv:2402.06090}, 2024.
\bibitem{Hartshorne} Robin Hartshorne. \textit{Algebraic Geometry}, volume 52. Springer Science \& Business Media, 2013.
\bibitem{conner2023sullivant} Austin Conner, Kangjin Han, and Mateusz Micha\l{}ek. Sullivant-Talaska ideal of the cyclic Gaussian graphical model, \textit{arXiv:2308.05561 }, 2023.
\bibitem{DMV} Rodica Andreea Dinu, Mateusz Micha\l{}ek,  Martin Vodi\v{c}ka, \textit{Geometry of the Gaussian graphical model of the cycle}, \textit{arXiv:2111.02937}, preprint 2021.
\bibitem{stacks-project} The Stacks project authors. The stacks project. \href{https://stacks.math.columbia.edu}{https://stacks.math.columbia.edu}, 2024.
\end{thebibliography}
\end{document}